\def\cgaps#1{}
\def\Cgaps#1{}
\def\undersetbrace#1\to#2{\underbrace{#2}_{#1}}        
\def\oversetbrace#1\to#2{\overbrace{#2}^{#1}}
\def\AMSunderset#1\to#2{\underset{#1}{#2}}
\def\AMSoverset#1\to#2{\overset{#1}{#2}}
\newtheorem{thm}{Theorem}
\newtheorem*{prop*}{Proposition}
\newtheorem{conj}[thm]{Conjecture}
\newtheorem*{thm*}{Theorem}
\newtheorem{lem}[thm]{Lemma}
\newtheorem*{lem*}{Lemma}
\newtheorem{cor}[thm]{Corollary}
\newtheorem*{rem*}{Remark}
\newtheorem*{cor*}{Corollary}
\newcommand{\nmb}[2]{\ifx!#1{\ref{nmb:#2}}%
\else\if.#1{\label{nmb:#2}}%
\else\if0#1{\label{nmb:#2}}%
\else{{#2}}%
\fi\fi\fi}
\newcommand{\eean}{\end{eqnarray*}}
\newcommand{\benu}{\begin{enumerate}}
\newcommand{\eenu}{\end{enumerate}}
\newcommand{\bea}{\begin{eqnarray}}
\newcommand{\eea}{\end{eqnarray}}
\def\im{{\rm im}}
\newcommand{\Z}{{\mathbb Z}}
\newcommand{\R}{{\mathbb R}}
\def\o{\circ\,}
\def\X{\mathfrak X}
\def\si{\sigma}
\def\ph{\varphi}
\def\Ga{\Gamma}
\def\De{\Delta}
\def\i{^{-1}}
\def\ver{\on{\ver}}
\def\Emb{\on{Emb}}
\def\Imm{\on{Imm}}
\def\Diff{\on{Diff}}
\def\Vol{\operatorname{Vol}}
\def\x{\times}
\def\p{\partial} 
\def\X{{\mathfrak X}}
\def\L{\mathcal{L}}
\def\R{{\mathbb R}}
\def\exp{\operatorname{exp}}
\let\on=\operatorname
\def\Tr{\on{Tr}}
\def\vol{\on{vol}}
\def\x{\times}
\def\p{\partial}
\let\on=\operatorname
\def\L{\mathcal L}
\def\AMSonly#1{}
\def\p{\partial}
\def\R{\mathbb R}
\def\<{\big\langle}
\def\>{\big\rangle \:}
\def\div{\operatorname{div}}
\def\grad{\operatorname{grad}}
\def\Nor{\operatorname{Nor}}
\begin{document}
\title{Riemannian geometry of the space of volume preserving immersions}
\author[M. Bauer]{Martin Bauer}
\address{Faculty for Mathematics, Technical University of Vienna,  Austria}
\email{bauer.martin@univie.ac.at}

\author[P. Michor]{Peter W. Michor}
\address{Department of Mathematics, University of Vienna, Austria}
\email{peter.michor@univie.ac.at}

\author[O. M\"uller]{Olaf M\"uller}
\address{Department of Mathematics, University of Regensburg, Germany}
\email{Olaf.Mueller@mathematik.uni-regensburg.de }

\thanks{All authors were partially supported by the Erwin Schr\"{o}dinger Institute programme: 
Infinite-Dimensional Riemannian Geometry with Applications to Image Matching and Shape Analysis. 
M. Bauer was supported by the FWF-project P24625 (Geometry of Shape spaces).}%
\subjclass{Primary 58B20, 58D15}
 \keywords{Volume Preserving Immersions; Sobolev  Metrics; Well-posedness; Geodesic Equation}

\date{\today}%

\begin{abstract}
Given a compact manifold $M$ and a Riemannian manifold $N$ of bounded geometry, we consider the manifold $\on{Imm} (M,N)$ of immersions from $M$ to $N$ and its subset $\on{Imm}_\mu (M,N)$ of those immersions with the property that the volume-form of the pull-back metric equals $\mu$. We first show that the non-minimal elements of $\on{Imm}_\mu (M,N) $ form a splitting submanifold. On this submanifold we consider the Levi-Civita connection for various natural Sobolev metrics write down the geodesic equation and show local well-posedness in many cases. The question is a natural generalization of the corresponding well-posedness question for the group of volume-preserving diffeomorphisms, which is of great importance in fluid mechanics. 
\end{abstract}

\maketitle
\setcounter{tocdepth}{1}
\tableofcontents

\section{Introduction.}
Let $M$ be a compact connected (oriented) $d$-dimensional manifold, and let $(N,\bar g)$ be a Riemannian manifold of bounded geometry.
In this article we study Riemannian metrics on the space $\Imm_{\mu}(M,N)$ of all immersions from $M$ to $N$ that preserve a fixed volume form 
$\mu$; i.e., those immersions $f$ such that $\on{vol}(f^*\bar g)=\mu$.

The interest in this space can be motivated from applications in the study of biological membranes, where the volume density of the  surface remains constant 
during certain biological deformations. Another source of interest can be found in connections to the field of mathematical hydrodynamics, as the space $\Imm_{\mu}(M,N)$
can be seen as a direct generalization of the group of all volume preserving diffeomorphisms. As a consequence the geodesic equations studied in Sect.~\ref{sec:L2} can be seen as an analogue of 
Euler's equation for the motion of an incompressible fluid. We will employ similar methods as Ebin and Marsden \cite{EM1970} to study the wellposedness of some of the equations that appear in the context 
of (higher order) metrics on $\Imm_{\mu}(M,N)$. Finally, the analysis of this article can be seen as a direct continuiation of the analysis of Preston \cite{Preston2011,Preston2012,PS2013} for the motion and geometry of the space and whips and chains, which would 
correspond to the choice $M=S^1$ or $M=[0,1]$ and $N=\mathbb R^2$. In Sect.~\ref{sec:ex} we will compare the results of this article, with some of the results obtained in these already  better investigated situations.

We will consider the space $\Imm_{\mu}(M,N)$ as a subspace of the bigger space of all smooth immersions from $M$ to $N$. Another interesting space that appears in this context is the space $\on{Imm}_{g}(M,N)$ of all isometric immersions; i.e., 
all immersions that pull back $\bar g$ to a fixed metric $g$ on $M$. Similarily, one can consider all these spaces in the context of embeddings as well.
We have the following diagram of inclusions:
\begin{align*}
&\on{Imm}_{g}(M,N)\subset\on{Imm}_{\mu}(M,N)\subset \on{Imm}(M,N)\,,\\
&\on{Emb}_{g}(M,N)\subset\on{Emb}_{\mu}(M,N)\subset \on{Emb}(M,N)\,.
\end{align*}
Here $\on{Emb}_{g}(M,N)$ and $\on{Emb}_{\mu}(M,N)$ are defined similar as for the bigger spaces of immersions. 
We will concentrate in this article on the space $\on{Imm}_{\mu}(M,N)$ (resp. $\on{Emb}_{\mu}(M,N)$) and we plan to consider the geometry of the space of isometric immersions (embeddings) in future work. 

In the article \cite{Molitor2012} it has been shown that the space $\on{Emb}^{\times}_{\mu}(M,N)$ is a smooth tame splitting submanifold of the space of all smooth embeddings $\on{Emb}(M,N)$,  
where the elements of the spaces $\on{Emb}^{\times}_{\mu}(M,N)$ are assumed to have nowhere vanishing second fundamental form. The choice of this space is not very fortunate 
for our purposes for various reasons; e.g., in the case of closed surfaces in $\mathbb R^3$ this condition restricts to convex surfaces only. 
Thus, as a first step,  we want to ged rid of that additional condition and show a similar statement for the spaces in the above diagram. 
Similar, as in \cite{Molitor2012}, the proof of this statements will be an application of the Nash-Moser inverse function theorem, however we will have to consider a different splitting of the tangent space.
The proof of these statements will be given in Sect. \ref{Imm_vol}. However we will still be forced to require the immersions to be not minimal; i.e., 
they do not have an everywhere vanishing  mean curvature. 
In the case of embeddings into $\mathbb R^3$, the absence of compact minimal embeddings already shows that this is only a weak restriction. For the space of all volume preserving embeddings the submanifold 
result has been shown in \cite{GV2014}, using a different method of proof.

In the second part of this article we will equip the space $\on{Imm}(M,N)$ with the family of reparametrization invariant Sobolev metrics as introduced in \cite{Bauer2011b,Bauer2012d}:
\begin{align*}
G_f(h,k)=\int_M \bar g((1+\Delta)^lh,k)\mu,\qquad l\in \mathbb N \,.
\end{align*}
Here $\Delta$ denotes the Bochner-Laplacian of the pullback metric $g=f^*\bar g$. See also \cite{Bauer2014} for an overview on various metrics on spaces of immersions.
In this article we will be interested in the induced metric of these metrics on the submanifold $\on{Imm}_{\mu}(M,N)$. In particular we  
will discuss the orthogonal projection from 
$T\Imm(M,N)$ to $T\Imm_{\mu}(M,N)$ with respect to these metrics, consider the induced geodesic equation on the submanifold, 
and give sufficient conditions on the order $l$ to ensure local well-posedness of the corresponding geodesic equations.

We will conclude the article with the two special cases of volume preserving diffeomorphisms ($M=N$) and constant speed parametrized curves ($M=S^1$, $N=\mathbb R^2$).

\section{The manifold of immersions}\label{sec:notation}
Let $M$ be a compact connected (oriented) finite dimensional manifold, and let $(N,\bar g)$ be a Riemannian manifold of bounded geometry.
To shorten notation we will somtimes write $d$ to  denote the dimension of the manifold $M$; i.e., $\dim(M)=d$. 
Let $\Emb(M,N)$ be the space of all 
smooth embeddings $M\to N$. It is a smooth manifold modelled on Fr\'echet spaces. 
The tangent space at $f$ of $\Emb(M,N)$ equals $\Gamma(f^*TN)$, the space of sections of $TN$ along 
$f$, and the tangent bundle equals the open subset of $C^\infty(M,TN)$ consisting of those 
$h:M\to TN$ such that $\pi_N\o h \in \Emb(M,N)$. 
More generally, let $\Imm(M,N)$ be the smooth Fr\'echet manifold of all smooth immersions $M\to N$. Similarly to
$\Emb(M,N)$ the  tangent bundle equals the open subset of $C^\infty(M,TN)$ consisting of those 
$h:M\to TN$ such that $\pi_N\o h \in \Imm(M,N)$. See \cite{KrieglMichor97} as a general reference for calculus in infinite 
dimensions, and for nearly all spaces that will be used here. From here onwards we will only work on the more general space of immersions, 
however all results continue to hold for embeddings as well.

Following the presentation in \cite{Bauer2011b} we also introduce the Sobolev completions of
the relevant spaces of mappings. 
In the canonical charts for $\Imm(M,N)$ centered at an immersion $f_0$, 
every immersion corresponds to a section of
the vector bundle $f_0^*TN$ over $M$ (see \cite[section~42]{MichorG}). 
The smooth Hilbert manifold $\Imm^k(M,N)$ (for $k>\dim(M)/2+1$) 
is then constructed by gluing together the Sobolev completions
$H^k(f_0^*TN)$ of each canonical chart.
One has
$$\Imm^{k+1}(M,N) \subset \Imm^k(M,N),\qquad \bigcap_{k}\Imm^k(M,N) = \Imm(M,N)\,.$$
Similarly, Sobolev completions of the space $T\Imm(M,N) \subset C^\infty(M,TN)$ are defined
as $H^k$-mappings from $M$ into $TN$; i.e., $T\Imm^k(M,N)=H^k(M,TN)$.  
More information can be found in \cite{Shubin1987} and in \cite{EichhornFricke1998}.

In the following we will introduce some notation, that we will use throughout the article.
For $f\in \Imm(M,N)$ we denote by $g=f^*\bar g$ the pullback metric on $M$; we use $g$ if 
we need short notation, and $f^*\bar g$ if we stress the dependence on $f$.

The \emph{normal bundle} $\Nor(f)$ of an immersion $f$ is a sub-bundle of $f^*TN$ 
whose fibers consist of all vectors that are $\bar g$-orthogonal to the image of $f$:
$$\Nor(f)_x = \big\{ Y \in T_{f(x)}N : \forall X \in T_xM : \bar g(Y,Tf.X)=0  \big\}.$$
If $\dim(M)=\dim(N)$ then the normal bundle is the zero vector bundle. 
Any vector field $h$ along $f \in \Imm(M,N)$ can be decomposed uniquely 
into parts {\it tangential} and {\it normal} to $f$ as
\begin{equation}
\label{decomposition}
h=Tf.h^\top + h^\bot,
\end{equation}
where $h^\top$ is a vector field on $M$ and $h^\bot$ is a section of the normal bundle $\Nor(f)$. 

Let $X$ and $Y$ be vector fields on $M$. 
Then the covariant derivative $\nabla^g_X Tf.Y$ splits into tangential and a normal parts as
$$\nabla_X Tf.Y=Tf.(\nabla_X Tf.Y)^\top + (\nabla_X Tf.Y)^\bot = Tf.\nabla_X Y + S(X,Y).$$
$S=S^f$ is the \emph{second fundamental form of $f$}. 
It is a symmetric bilinear form with values in the normal bundle of $f$. 
When $Tf$ is seen as a section of $T^*M \otimes f^*TN$ one has $S=\nabla Tf$ since
$$S(X,Y) = \nabla_X Tf.Y - Tf.\nabla_X Y = (\nabla Tf)(X,Y).$$
The trace of $S$ is the \emph{vector valued mean curvature} $\Tr^g(S) \in \Ga\big(\Nor(f)\big)$.

\subsection{Riemannian metrics on spaces of immersions}
A Riemannian metric $G$ on $\Imm(M,N)$ is a section of the bundle
$$L^2_{\on{sym}}(T\Imm(M,N);\R)$$ 
such that at every $f \in \Imm(M,N)$, $G_f$ is a symmetric positive definite bilinear mapping 
$$G_f: T_f\Imm(M,N) \x T_f\Imm(M,N) \to \R.$$
Each metric is {\it weak} in the sense that $G_f$, seen as a mapping
$$G_f: T_f\Imm(M,N) \to T^*_f\Imm(M,N)\,,$$
is injective (but it can never be surjective).

\begin{rem*}
We require that our
metrics will be invariant under the action of $\on{Diff}(M)$, hence the
quotient map dividing by this action will be a Riemannian submersion off the orbifold singularities of the quotient; 
see  \cite{CerveraMascaroMichor91}.
This means that the tangent map of the quotient map $\Imm(M,N)\to \Imm(M,N)/\Diff(M)$ is 
a metric quotient mapping between all tangent spaces.
Thus we will get Riemannian metrics on the quotient space $\Imm(M,N)/\Diff(M)$.
\end{rem*}

All of the metrics we will look at will be of the form 
\begin{equation}
G^L_f(h,k) = \int_{M} \bar g( L^f h, k)\, \vol(f^*\bar g)
\end{equation}
where $L^f:T_f\Imm(M,N) \to T_f \Imm(M,N)$ is a positive bijective operator depending smoothly on $f$,
which is selfadjoint unbounded in the Hilbert space completion of $T_f\Imm(M,N)$ with inner product $G^{L^2}_f$.
Here $G^{L^2}$ denotes the metric that is induced by the operator $L=\on{Id}$; i.e.,
\begin{equation}
G^{L^2}_f(h,k) = \int_{M} \bar g(h, k)\, \vol(f^*\bar g).
\end{equation}

We will assume in addition that $L$ is equivariant with respect to reparametrizations; i.e.,
$$L^{f\circ\ph}=\ph^* \o L^f \o (\ph\i)^* = \ph^*(L^f)\qquad \text{for all }\ph\in\on{Diff}(M).$$
Then the metric $G^L$ is invariant under the action of $\on{Diff}(M)$ as required above.

In this article we will focus on integer order Sobolev metrics; i.e., metrics of the form:
\begin{equation}
 G^l_f(h,k) = \int_{M} \bar g( (1+\Delta_g)^l h, k)\, \vol(f^*\bar g)
\end{equation}
where $\Delta$ is the Bochner Laplacian of the pullback metric $g=f^*\bar g$ and where $l\in \mathbb N$.

\begin{thm}\label{thm:wellposedness}
Let $G^L$ be the metric induced by the operator $L=(1+\Delta_g)^l$.
\begin{enumerate}
\item \label{WeakAndStrong} For any $l\geq 0$ and $k>\operatorname{min}\left(\frac{\operatorname{dim}(M)}{2}+1,l\right)$, the metric $G^L$ extends to a \emph{smooth weak Riemannian metric} on the Hilbert manifold $\Imm^{k}(M,N)$. 
For $l=k\in 2\mathbb N$ the metric extends to a \emph{strong Riemannian metric} on the Hilbert manifolds $\Imm^k(M,N)$.
 \item \label{SuperStrongExistence} For any $l\geq 1$ and $k>\frac{\operatorname{dim}(M)}{2}+1$ the initial value problem for the geodesic equation 
has unique local solutions both in the Hilbert manifold $\Imm^{k+2l}$ and in the Fr\'echet manifold $\Imm(M,N)$.
The solutions depend smoothly on $t$ and on the initial
conditions $f(0,\;.\;)$ and $f_t(0,\;.\;)$. Moreover the Riemannian exponential mapping $\exp$ exists 
and is smooth on a neighborhood of the zero section in the tangent bundle, 
and $(\pi,\exp)$ is a diffeomorphism from a (smaller) neighbourhood of the zero 
section to a neighborhood of the diagonal.
\end{enumerate}
\end{thm}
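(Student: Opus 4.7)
The plan is to follow the Ebin--Marsden strategy: pass to Sobolev completions, realise the geodesic flow as a smooth ODE on a Hilbert manifold, and then use a no-loss no-gain bootstrap to recover smooth solutions on the Fr\'echet manifold $\Imm(M,N)$ itself.

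For part (1), I would break the smoothness claim into three steps. First, $f\mapsto g = f^*\bar g$ is smooth from $\Imm^k(M,N)$ into the space of $H^{k-1}$ Riemannian metrics on $M$, as it involves only $\bar g$ and the first derivative of $f$. Second, the Bochner Laplacian $\Delta_g$, and hence $(1+\Delta_g)^l$, depend smoothly on $g$ and form a differential operator of order $2l$ whose coefficients lie in $H^{k-1}$; here one uses that $H^{k-1}$ is a Banach algebra for $k > \dim(M)/2 + 1$. Third, the integral pairs an $H^{k-2l}$ expression against an $H^k$ test field with an $H^{k-1}$ volume density, giving a continuous symmetric positive-definite bilinear form; distributing half of the derivatives onto each slot by integration by parts reduces the regularity requirement on $k$. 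For $l = k \in 2\N$ the operator $(1+\Delta_g)^l$ is elliptic of order $2k$, so standard elliptic theory upgrades $G^L_f\colon H^k \to (H^k)^*$ to an isomorphism, yielding the strong metric claim.

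For part (2) I would write the geodesic equation in the Hamiltonian form
\begin{equation}
\nabla_t f_t = (L^f)^{-1}\!\left(\tfrac12\,\grad_f G^L_f(f_t,f_t) - [\nabla_t, L^f] f_t\right) =: S^L(f, f_t).
\end{equation}
The key observation is that everything inside $(L^f)^{-1}$ involves at most $2l$ derivatives of $f_t$ together with finitely many derivatives of $f$ and of the ambient geometry $\bar g$, while elliptic regularity for $L^f$ regains exactly $2l$ derivatives. Hence the spray $S^L$ extends to a smooth vector field on $T\Imm^{k+2l}(M,N)$ whenever $k > \dim(M)/2 + 1$. Local existence, uniqueness, and smooth dependence on initial data in $\Imm^{k+2l}(M,N)$ then follow from the Cauchy--Lipschitz theorem for smooth vector fields on Hilbert manifolds, and smoothness of $\exp$ near the zero section is automatic.

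The main obstacle is verifying smoothness of the family $f\mapsto (L^f)^{-1}$ as a bounded-operator-valued map. This requires (i) uniform ellipticity of $L^f$ near a fixed $f_0$, which holds because $g = f^*\bar g$ stays close to $f_0^*\bar g$ in $H^{k-1}$; (ii) smooth dependence of the coefficients of $L^f$ on $f$, via the Banach algebra property of $H^{k-1}$; and (iii) smooth inversion, obtained by applying the inverse function theorem to the family of isomorphisms $L^f\colon H^{k+2l} \to H^k$. The final no-loss no-gain step requires commuting $S^L$ with an auxiliary elliptic operator of positive order and observing that the result has the same structural form as the geodesic equation; this propagates $\Imm^{k+1+2l}$ regularity along the flow, and iteration in $k$ produces smooth solutions and a smooth exponential map on $\Imm(M,N)$.
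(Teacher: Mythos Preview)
Your outline is correct and follows the same Ebin--Marsden strategy that underlies the paper's proof; the paper itself is largely a citation to \cite{Bauer2011b} for the geodesic spray argument and to \cite{Mueller2015} for the operator-theoretic input, and what you have written is a faithful sketch of what those references do. In particular, your symmetrisation by integration by parts is exactly the rewriting $G^L_f(h,k)=\int_M \bar g\big((1+\Delta_g)^{\lfloor l/2\rfloor}h,(1+\Delta_g)^{\lceil l/2\rceil}k\big)\vol(f^*\bar g)$ that the paper uses, and your no-loss no-gain bootstrap is the mechanism by which \cite{Bauer2011b} passes from the Hilbert to the Fr\'echet category.

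The one point where you are too quick is the phrase ``standard elliptic theory''. The coefficients of $L^f=(1+\Delta_{f^*\bar g})^l$ are only of class $H^{k-1}$, not smooth, so neither the isomorphism $L^f:H^{k+2l}\to H^k$ nor the strong-metric claim follow from off-the-shelf elliptic regularity. One needs an elliptic package for operators with Sobolev coefficients that tracks the admissible range of Sobolev orders on which $L^f$ acts as an isomorphism and depends smoothly on $f$; this is precisely the ``$k$-safeness'' result the paper imports from \cite{Mueller2015}. Your items (i)--(iii) for the smoothness of $f\mapsto (L^f)^{-1}$ are the right checklist, but the inverse function theorem step presupposes that $L^f$ is already known to be an isomorphism between the stated Sobolev spaces, and that is the nontrivial analytic input. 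This is a matter of technical care rather than a conceptual gap: once you invoke the appropriate rough-coefficient elliptic theory, your argument goes through as written.
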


\begin{rem*}
This theorem also holds for general (non-integer) Sobolev order $l\in \mathbb R_{\ge 0}$, but it needs some technical tools which will be developed in a future paper.
\end{rem*}

\begin{proof}
To prove the first statement of Item \eqref{WeakAndStrong} we rewrite the metric as
\begin{equation*}
G_f^L(h,k)=\int_M \bar g\left( (1+\Delta_g)^{\lfloor l/2\rfloor}(h), (1+\Delta_g)^{\lceil l/2 \rceil}(k)\right)\, \operatorname{vol}(f^*\bar g)\;.
\end{equation*}
We need to show that this extends to a smooth Riemannian metric on the Sobolev completion of high enough order, which is not trivial as the operator 
$\Delta_g$ has non-smooth coefficients (the coefficients depend on the foot point immersion $f$). To prove this statement we write $\Delta_g$ in local coordinates:
\begin{equation}
\Delta_g(h)=\frac{1}{\sqrt{|g|}}\partial_i \left(\sqrt{|g|}g^{ij}\partial_j h      \right)\;.
\end{equation}
Note, that $g=f^{*}\bar g$ is of  regularity $H^{k-1}$ as the immersion $f$ is of  regularity $H^k$. Using carefully the Sobolev embedding theorem one can thus show that $\Delta$ 
smoothly extends to an operator field
\begin{align*}
\Imm^k(M,N)&\to L(H^s(M, TN),H^{s-2}(M, TN))\\
f&\mapsto \Delta_g
\end{align*}
for $2\leq s\leq k$, see also \cite{Mueller2015}, which proves the first assertion.



For the second assertion of Item \eqref{WeakAndStrong}, we pick an immersion $f_0$ and a 
standard convex neighborhood $U$ of $f_0$; i.e., we choose a covering of $f_0(M)$ via $N$-convex neighborhoods $W_i$, define $V_i:= f_0^{-1} (W_i)$ 
and require from each element $f$ of $U$ that $f(V_i) \subset W_i$. 
We have to compare the translationally invariant metric of the modelling Hilbert space $G_{f_0}$ with the pointwise metric $G_f$ and have to show that there is 
$$A \in C^{\infty} \left(U, BL (T_{f_0}\Imm^k(M,N), T_{f_0}\Imm^k(M,N))\right) $$ 
with $G_f (h,k) = G_{f_0} (A(f) h, k) $ such that $A(f)$ is an isomorphism for all $f \in U$. 
Here tangential vectors in the chart neighborhood are identified using the vector space structure. As we define charts via $\operatorname{exp}_N$ this can be expressed via $d \operatorname{exp}_N$, 
that is, via Jacobi fields in the target space $N$. We denote the corresponding map by $J$, which is an isomorphism $J: \Gamma_{H^s}( f_0^* T N) \rightarrow \Gamma_{H^s} (f^* T N) $ for all $s \in \Z$ with $s \leq k,$ 
because it is precomposition with a $H^k $ map as the chosen neighborhoods are convex.  
We can choose $A(f) :=   J \o L_{f_0}^{-1} \o J^{-1} \o L_f $, 
which is in fact an isomorphism, as $L_f:H^l \rightarrow H^{-l}$ is an isomorphism for all $f \in \Imm^l(M,N)$, 
which in turn is a direct consequence of $k$-safeness as laid down in \cite{Mueller2015}.

A proof of the Item $(2)$ is contained in \cite{Bauer2011b}, modulo the fact that one needs $k$-safeness of the corresponding operator as explained in detail in \cite{Mueller2015}. 
\end{proof}

In Sects. \ref{sec:L2} and \ref{sec:Hk} we will consider the restriction of these metrics to the submanifold of all
volume form preserving immersions.

\section{The submanifold structure of the space of volume preserving immersions}
In this section we will study the manifold structure of the space of all volume preserving immersions. 
For technical reasons which will become clear in the proof of the theorem, we restrict ourselves to non minimal immersions, which we will denote by
\begin{equation}
\on{Imm}^*_{\mu} (M,N):=\left\{ f\in\on{Imm}(M,N): \operatorname{Tr}^g(S)\neq 0 \right\}\;.
\end{equation}
Note, that this is not a pointwise condition, but that we are only excluding those immersions 
whose second fundamental form vanishes identically. 

In the following we will show that $\on{Imm}_{\mu}^*(M,N)$ is a submanifold of the manifold of immersions. We will follow the proof of \cite{Molitor2012}, but with much less restrictive conditions.
\begin{thm}\label{Imm_vol}
 The space $\on{Imm}_{\mu}^* (M,N)$ is a tame splitting Fr\'echet submanifold of $\on{Imm} (M,N)$, and the tangent space at an element $f \in \on{Imm}_{\mu} (M,N)$  is naturally isomorphic (via the postcomposition with the exponential map) to 
\begin{equation}\label{tangenspace_Immvol}
 T_{f,\mu}\Imm(M,N):= \{h\in T_f\Imm(M,N): \div^\mu (h^\top) - \bar g(h^\bot,\Tr^g(S)) =0 \}. 
\end{equation}
The same is true for Sobolev completions $\on{Imm}_{\mu}^{*,k} (M,N)$ of order $k> \frac{d}{2} +1$.
\end{thm}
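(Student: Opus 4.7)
The plan is to realize $\on{Imm}_\mu^*(M,N)$ as the level set $\Phi^{-1}(\mu)$ of the smooth constraint map
\[
\Phi\colon \on{Imm}^*(M,N) \longrightarrow \Omega^d(M), \qquad f \mapsto \vol(f^*\bar g),
\]
defined on the open subset $\on{Imm}^*(M,N) \subset \on{Imm}(M,N)$ of non-minimal immersions, and to apply the Nash--Moser implicit function theorem. It then suffices to show that $d\Phi_f$ admits a smooth tame right inverse whose image is a topological complement to $\ker d\Phi_f$. The analogous Sobolev statement will follow from the standard Banach implicit function theorem, since all operators in the construction extend continuously to the relevant Sobolev scales for $k > d/2 + 1$.

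A standard first-variation computation — using $\dot g = -2\bar g(h^\bot, S)$ on normal variations $h^\bot \in \Gamma(\Nor(f))$ and $\dot{\vol}(g) = \L_{h^\top}\vol(g) = \div^g(h^\top)\vol(g)$ on tangential variations $Tf.h^\top$ — yields
\[
d\Phi_f(h) = \bigl(\div^g(h^\top) - \bar g(h^\bot,\Tr^g(S))\bigr)\,\vol(g).
\]
Restricted to $f \in \on{Imm}_\mu^*$, where $\vol(g) = \mu$ and hence $\div^g = \div^\mu$, this immediately identifies $\ker d\Phi_f$ with the space $T_{f,\mu}\on{Imm}(M,N)$ from the statement, and will give the asserted tangent-space description once the submanifold structure is established.

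For the right inverse $R_f\colon \Omega^d(M) \to T_f\on{Imm}(M,N)$ I would write any $d$-form as $\phi\,\vol(g)$ and first put
\[
\lambda := -\frac{\int_M \phi\,\vol(g)}{\int_M |\Tr^g(S)|^2\,\vol(g)}, \qquad h^\bot := \lambda\,\Tr^g(S),
\]
which is well defined precisely because $f$ is not minimal. By construction $\phi - \lambda|\Tr^g(S)|^2$ has zero $\vol(g)$-mean, so Hodge theory on $(M,g)$ produces a unique mean-zero $u \in C^\infty(M)$ with $\Delta_g u = \phi - \lambda|\Tr^g(S)|^2$; set $h^\top := \grad^g u$ and $R_f(\phi\,\vol(g)) := Tf.h^\top + h^\bot$. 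Then $d\Phi_f\circ R_f = \on{Id}$ by construction, and if some $h' = Tf.\grad^g u' + \lambda'\Tr^g(S) \in \on{image}(R_f) \cap \ker d\Phi_f$, integrating the identity $\Delta_g u' = \lambda'|\Tr^g(S)|^2$ against $\vol(g)$ forces $\lambda' = 0$ and then $u'$ constant, so $h'=0$. Hence $\on{image}(R_f)$ is a topological complement to $\ker d\Phi_f$.

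The main obstacle is verifying that $\Phi$ and the family $R_f$ are smooth tame in the Nash--Moser sense. The key ingredients I expect to need are (i) the smooth dependence, with controlled derivative loss, of $g = f^*\bar g$, $\vol(g)$, and $\Tr^g(S) = \Tr^g(\nabla Tf)$ on $f$; (ii) standard tame elliptic estimates for $\Delta_g^{-1}$ on mean-zero functions on $(M,g)$; and (iii) the $k$-safeness framework of \cite{Mueller2015} already invoked in the proof of Theorem \ref{thm:wellposedness} to handle the $f$-dependence of the coefficients of $\Delta_g$ uniformly on a chart neighbourhood of a fixed $f_0 \in \on{Imm}_\mu^*(M,N)$. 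Once these estimates are in place, Nash--Moser will furnish a local diffeomorphism from a neighbourhood of $f_0$ onto a product $\ker d\Phi_{f_0} \times \Omega^d(M)$ sending $\Phi^{-1}(\mu)$ to $\ker d\Phi_{f_0} \times \{0\}$, which is the required tame splitting submanifold structure; the same argument, with Banach replacing Nash--Moser, covers the $\on{Imm}_\mu^{*,k}(M,N)$ case.
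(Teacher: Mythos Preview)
Your overall strategy is sound and the argument can be made to work, but it takes a genuinely different route from the paper's, and there is a small sign inconsistency worth flagging.

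\textbf{Comparison with the paper.} The paper follows Molitor's template: it builds an explicit chart map $Q_f(h)=(h,P_f(h)-1)$, where $P_f$ is the Radon--Nikodym density $\rho$ read through an exponential chart, and applies the tame \emph{inverse} function theorem after proving that $d_XQ_f$ is bijective for $X$ small. The crucial linear step is to show that the second-order operator $A_f:p\mapsto d_XP_f(\grad^g p + p\,\Tr^g(S))$, whose leading part at $X=0$ is $\Delta^g-\|\Tr^g(S)\|^2$, is injective (hence bijective, by index zero); this is done via the strong maximum principle, using only that $\|\Tr^g(S)\|^2\ge 0$ and does not vanish identically. Thus the paper's complement to $T_{f,\mu}\Imm(M,N)$ is $\{Tf.\grad^g p + p\,\Tr^g(S):p\in C^\infty(M)\}$, which is exactly the $G^{L^2}$-orthogonal complement exploited later in Theorem~\ref{thm:projection} and the geodesic equations. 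Your approach instead applies Nash--Moser in \emph{implicit} function form: you build a right inverse $R_f$ to $d\Phi_f$ by first absorbing the mean of $\phi$ with a \emph{constant} multiple $\lambda\,\Tr^g(S)$ and then solving the ordinary Poisson equation $\Delta_g u=\cdots$ on mean-zero functions. This is more elementary---no maximum principle is needed, only Hodge theory for $\Delta_g$ and the single scalar $\int_M|\Tr^g(S)|^2\,\vol(g)>0$---and it yields the submanifold result just as well. What you give up is the geometric identification of the complement: your $\on{image}(R_f)$ is not the $L^2$-orthogonal complement, so the decomposition you produce does not feed directly into the later sections of the paper the way Lemma~\ref{lem:decomp:immvol} does.

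\textbf{A minor correction.} Your formulas are not quite consistent: with $h^\bot=\lambda\,\Tr^g(S)$ and $h^\top=\grad^g u$ the equation $d\Phi_f(h)=\phi\,\vol(g)$ reads $\div^g\grad^g u-\lambda\|\Tr^g(S)\|^2=\phi$, so the solvability condition forces $\lambda=-\int_M\phi\,\vol(g)\big/\int_M\|\Tr^g(S)\|^2\,\vol(g)$ together with $\div^g\grad^g u=\phi+\lambda\|\Tr^g(S)\|^2$, not $\phi-\lambda\|\Tr^g(S)\|^2$ as you wrote. (Equivalently, with the paper's positive Bochner convention $\Delta_g=-\div^g\grad^g$ one has $\Delta_g u=-(\phi+\lambda\|\Tr^g(S)\|^2)$.) Once this sign is fixed, your right inverse is correct and the rest of the outline goes through.
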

\begin{rem*}
This result is stronger then the one in \cite{Molitor2012}. There it has only been shown that the space
of embeddings with nowhere vanishing mean curvature is a a tame splitting Fr\'echet submanifold of $\on{Emb} (M,N)$. 
The condition of having a nowhere vanishing mean curvature is, however, too restrictive for our purposes, since it only allows for convex (resp. concave)
surfaces in the case of hypersurfaces.
\end{rem*}

\begin{rem*}
Our main subject will be spaces of {\em immersions}, but note that the proofs of most theorems, e.g., of the previous one, immediately carry over to the smaller spaces of {\em embeddings}. 
\end{rem*}

The first step in the proof of the theorem (modeled after the proof of the corresponding theorem in \cite{Molitor2012}) is the following proposition which allows to decompose any vector field $h$ along $f$ into a  part
$h_\mu$ that is divergence-free -- in the sense that its flow preserves the volume $\mu$ -- and its complement.

\begin{lem}\label{lem:decomp:immvol}
Let $f\in \Imm(M,N)$ with $\vol(f^*\bar g)=\mu$.
Then for each tangent vector  $h\in T_f\Imm(M,N)$ there exist   
$$
h_\mu\in T_{f,\mu}\Imm(M,N)
\quad\text{  and }\quad
p\in \begin{cases} 
C^\infty(M) \quad &\text{if}\quad\Tr^g(S)\ne 0 \\
                       C^\infty(M)/\mathbb R \quad &\text{if}\quad\Tr^g(S)= 0 
				 \end{cases}
$$
such that 
\begin{equation}\label{p_l2_proj}
h= h_\mu + Tf.\on{grad}^g(p) + p.\Tr^g(S).
\end{equation}
The field $h_\mu$ is uniqely determined by \eqref{p_l2_proj}. But $p$ is uniquely 
determined only if $\Tr^g(S)\ne 0$ and is unique up to an additive constant if $\Tr^g(S)=0$. 
However, the decomposition \eqref{p_l2_proj} is unique in both cases, and depends smoothly on 
$h\in T\Imm(M,N)$ and for $f\in \Imm(M,N)$ with $\Tr^g(S)\neq 0$ also on $f$. The mappings
\begin{align}
&P^1:T_f\Imm(M,N)\to T_f\Imm(M,N), &\quad& P^1(h) = h_\mu,
\\
&P^2:T_f\Imm(M,N)\to \X(M), && P^2(h) = \grad^g(p),
\\
&P^3:T_f\Imm(M,N)\to \Gamma (M,f^* TN), && P^3(h) = p.\Tr^g(S).
\end{align}
are parts of smooth fiber linear  homomorphisms of vector 
bundles over $\Imm(M,N)$. The same is true for $h$ and $f$ from some Sobolev class $H^k$; in this case, $p$ is of Sobolev class $H^{k+1}$.
\end{lem}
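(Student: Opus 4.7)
The plan is to substitute the ansatz (\ref{p_l2_proj}) into the linear volume-preserving condition (\ref{tangenspace_Immvol}) and derive an elliptic equation for the scalar function $p$. Since $\vol(f^*\bar g) = \mu$ one has $\div^\mu = \div^g$; the tangential part of the correction $Tf.\grad^g(p) + p\,\Tr^g(S)$ is $\grad^g(p)$ and its normal part is $p\,\Tr^g(S)$, so demanding that $h_\mu := h - Tf.\grad^g(p) - p\,\Tr^g(S) \in T_{f,\mu}\Imm(M,N)$ yields, with the sign convention $\Delta^g = -\div^g\grad^g$ used in Theorem \ref{thm:wellposedness},
\begin{equation}
L_f\, p := \bigl(\Delta^g + |\Tr^g(S)|^2_{\bar g}\bigr) p = \bar g(h^\bot, \Tr^g(S)) - \div^\mu(h^\top).
\end{equation}
This is a linear, self-adjoint, nonnegative, second-order elliptic equation on the compact connected manifold $M$.

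First I would establish existence and uniqueness of $p$ in the smooth category. If $\Tr^g(S) \not\equiv 0$, pairing $L_f p = 0$ with $p$ in $L^2(\mu)$ and integrating by parts gives
\begin{equation}
\int_M \bigl(|\grad^g p|_g^2 + |\Tr^g(S)|^2 p^2\bigr)\,\mu = 0,
\end{equation}
forcing $p$ to be constant (by connectedness of $M$) and to vanish on the nonempty open set where $\Tr^g(S)\neq 0$; hence $p\equiv 0$, so $L_f$ has trivial kernel. Self-adjoint Fredholm theory combined with elliptic regularity then supplies a unique smooth solution $p$ for every smooth right-hand side. If instead $\Tr^g(S) \equiv 0$, then $L_f = \Delta^g$, whose kernel and cokernel are both the constants; the solvability condition $\int_M \bigl(\bar g(h^\bot,\Tr^g(S)) - \div^\mu h^\top\bigr)\mu = -\int_M \div^\mu h^\top\,\mu = 0$ follows from Stokes' theorem, so $p$ exists and is unique modulo an additive constant. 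The ambiguity drops out of $Tf.\grad^g p + p\,\Tr^g(S) = Tf.\grad^g p$, so the decomposition (\ref{p_l2_proj}) is unique in both cases.

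Next I would upgrade to smooth parameter dependence. The maps $f \mapsto g = f^*\bar g$ and $f \mapsto \Tr^g(S)$ depend smoothly (and $k$-safely in the sense of \cite{Mueller2015}) on the relevant Sobolev scales, so $f \mapsto L_f$ is a smooth family of second-order elliptic operators. When $\Tr^g(S)\neq 0$ the operator $L_f$ is an isomorphism between the appropriate Sobolev spaces, and smoothness of operator inversion then transfers to smoothness of $f \mapsto L_f^{-1}$. Postcomposing with the fiberwise-linear, smoothly $f$-dependent map $h \mapsto \bar g(h^\bot, \Tr^g(S)) - \div^\mu(h^\top)$ yields the claimed smooth vector-bundle homomorphisms $P^2$ and $P^3$, and $P^1 = \on{Id} - Tf \circ P^2 - P^3$ inherits smoothness.

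The main technical obstacle is controlling Sobolev exponents. A naive split of $h$ into tangential and normal parts costs a derivative (since $Tf$ is only of class $H^{k-1}$), which would yield $p \in H^k$ rather than the asserted $H^{k+1}$. The remedy is to interpret $\bar g(h^\bot, \Tr^g(S)) - \div^\mu(h^\top)$ intrinsically as the linearization at $f$ of the map $f \mapsto \vol(f^*\bar g)$ in direction $h$, which is a genuine first-order differential operator in $h$ with coefficients of regularity at least $H^{k-1}$; the right-hand side thus lies in $H^{k-1}$, and elliptic regularity for $L_f$ then delivers $p \in H^{k+1}$. Verifying this rigorously requires precisely the $k$-safeness machinery of \cite{Mueller2015} already invoked in the proof of Theorem \ref{thm:wellposedness}, but no further analytic input.
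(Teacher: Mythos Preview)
Your proposal is correct and follows essentially the same route as the paper: reduce the ansatz to a scalar elliptic equation for $p$, analyse the kernel of the operator in the two cases $\Tr^g(S)\neq 0$ and $\Tr^g(S)=0$, and then invoke smooth dependence of the inverse together with the $k$-safeness machinery for the Sobolev statement. The one substantive difference is how injectivity is obtained when $\Tr^g(S)\not\equiv 0$: the paper appeals to Hopf's strong maximum principle to conclude that $\ker(\Delta^g-\|\Tr^g(S)\|^2_{\bar g})$ consists only of constants, whereas you use the energy identity $\int_M(|\grad^g p|^2+|\Tr^g(S)|^2 p^2)\,\mu=0$. Your argument is more elementary and self-contained; the paper's maximum-principle route has the advantage that it extends verbatim to the perturbed operator appearing in the subsequent Nash--Moser step (where the zeroth-order coefficient is no longer $-\|\Tr^g(S)\|^2$ on the nose but merely nonpositive), which is presumably why the authors chose it. Your observation that the right-hand side should be read as the intrinsic first-order expression $\Tr^g\bigl(\bar g(\nabla h,Tf)\bigr)$ rather than the split form $\div^g(h^\top)-\bar g(h^\bot,\Tr^g(S))$ to get $p\in H^{k+1}$ is exactly the point the paper uses implicitly.
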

It should be mentioned that in the case of $M=N$, this decomposition is exactly the Helmholtz-Hodge decomposition, 
as the tangential vectors in $T_{f,\mu}\Imm(M,N)$ are co-closed and so can be decomposed as sums of harmonic forms 
and coexact forms. This result, for $f$ an embedding of an 
oriented compact manifold with nowhere vanishing mean curvature $\Tr^g(S)$, is due to Molitor \cite[proposition 1.4]{Molitor2012}.
Since we claim here more, we shall sketch a (slightly different) proof.

\begin{proof} If we can write $h$ as \eqref{p_l2_proj}, we can apply $\div^g$ to the tangential part of 
\eqref{p_l2_proj} and apply $\bar g(\quad,\Tr^g(S))$ to the normal part of \eqref{p_l2_proj} to obtain 
\begin{align}
\div^g(h^\top) &= \div^g\big((h_{\mu})^\top\big) + \De^g(p) 
= \bar g\big((h_{\mu})^\bot,\Tr^g(S)\big)+ \De^g(p)
\quad{\text{and}}
\\
\bar g(h^\bot,\Tr^g(S)) &= \bar g\big((h_{\mu})^\bot,\Tr^g(S)\big) + p \|\Tr^g(S)\|^2_{\bar g},
\text{which combine to} 
\\
(\De^g &- \|\Tr^g(S)\|^2_{\bar g})(p) = \div^g(h^\top) -  \bar g(h^\bot,\Tr^g(S)).
\end{align}
Now, as a selfadjoint elliptic differential operator, 
for any $k\in \mathbb N \cup \{ \infty \}$ and Sobolev space $H^{k}(M)$  of functions, 
$\De^g-\|\Tr^g(S)\|^2_{\bar g}:H^{k,g}(M)\to H^{k-2,g}(M)$ has index 
zero. Moreover, by Hopf's maximum principle (see \cite[page~96]{Aubin98}, carried over to a compact 
manifold) the kernel of 
$D:= \De^g-\|\Tr^g(S)\|^2_{\bar g}$ on $C^2$ functions is contained in the space of constant functions.

If $\Tr^g(S)\ne 0$ then this kernel is zero and 
$\De^g-\|\Tr^g(S)\|^2_{\bar g}:H^{k}(M)\to H^{k-2}(M)$ is a linear isomorphism, in particular $D^{-1}$ is a well-defined bounded linear map $H^k \rightarrow H^{k+2}$ for all $k > n/2$, and we get a unique
function 
\begin{equation}
p = L^{-1} (  div^g(h^\top) -  \bar g(h^\bot,\Tr^g(S))) \in L^{-1} (H^{k-1}) = H^{k+1}.
\end{equation}
Then the desired decomposition is
\begin{align}
h_{\mu}&= h - Tf.\grad^g(p) - p.\Tr^g(S),\quad \text{  because}
\\
\div^g\big(h_{\mu}^\top\big) &= \div^g(h^\top) - \De^g(p) = 
\bar g\bigl(h_{\mu}^\bot,\Tr^g(S)\bigr).
\end{align}

If $\Tr^g(S)=0$ then let $h^\top = h^\top_{\mu} + \grad^g(p)$ be the Helmholtz-Hodge 
decomposition of $\X(M)$, where now $p$ is only unique up to an additive constant.
Put $h_{\mu}= Tf.h^\top_{\mu} + h^\bot$ and get the desired decomposition.

Let us give a second argument.
If $\Tr^g(S)=0$ then $\ker(\De^g-\|\Tr^g(S)\|^2_{\bar g})=\ker(\De^g)=\mathbb R$.
Recall also the Hodge decomposition 
$C^\infty(M)= \De^g(C^\infty(M)) \oplus H^0_{\text{dR}}(M)=\De^g(C^\infty(M)) \oplus \mathbb R$.
Thus $\De^g:C^\infty(M)/\mathbb R\to C^\infty(M)/\mathbb R$ is a linear isomorphism, and the above 
proves the tangential Helmholtz-Hodge decomposition, and we use 
$h_{\mu}=Tf.h^\top_{\mu} + h^\bot$.
\end{proof}

For the proof of our main statement --  Theorem \ref{Imm_vol} -- we will need two further lemmas. 
Therefore we will need the following definitions:
\begin{itemize}
 \item With $\rho(f) \in C^\infty(M)$ we denote the Radon-Nikodym derivative of the volume density $\vol(f^* g)$ with respect to the background density $\mu$; i.e.,  
 \begin{equation*}
  \vol(f^* g) = \rho(f)\mu.
 \end{equation*}
 \item Let $P_f := \rho (f) \o \phi_f^{-1}$, with $\phi_f$ is a standard exponential chart around $f$ and 
$Q_f : h \mapsto (h , P_f (h)-1 )$.  
\end{itemize}
Now Lemma 1.7 from \cite{Molitor2012} holds for our new definition of $h_\mu$:
\begin{lem}
The map $P_f$ is a smooth tame map. Its derivative $$dP^1_f : (h,k) \mapsto d_h P_f \cdot k$$ is, for fixed $h$, a linear partial differential operator of degree $1$, and its coefficients are 
partial differential operators of degree $1$ in the variable $h$. Moreover, for all $k \in \Gamma(f^* TN)$, we have

$$  d_0P^1_f \cdot k = \big(  \on{div}^g (k^T \o f^{-1}) - g(k^\perp \o f^{-1}, \Tr^g(S))  \big) \o f \cdot P^1_f (0).$$ 
\end{lem}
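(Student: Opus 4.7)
The plan is to reduce everything to a first-variation-of-area computation performed in a local chart on $M$.

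In a coordinate patch with $\mu = m(x)\,dx^1\wedge\cdots\wedge dx^d$, one has
\[
\rho(f)(x) = \frac{\sqrt{\det g_{ij}(f)(x)}}{m(x)}, \qquad g_{ij}(f) = \bar g(\p_i f,\p_j f),
\]
so $f\mapsto \rho(f)$ is a nonlinear first-order differential operator whose nonlinearity is a smooth fibre-wise map (polynomial in $\p_i f$, composed with $\sqrt{\det}$ on the cone of positive-definite symmetric matrices). Smoothness and tameness of this operator, and hence of $P_f = \rho\o\phi_f^{-1}$, follow from Hamilton's standard results on tame composition with smooth fibre-wise operations on jet bundles. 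The same coordinate expression shows that $P_f(h)(x)$ depends on $x$, on the pointwise values of $f$ and $h$, and on the first partial derivatives of $h$ at $x$; differentiation in a direction $k$ therefore produces at most first partial derivatives of $k$ with coefficients depending on at most first partial derivatives of $h$, which is exactly the PDO structure claimed.

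The formula at $h=0$ is then the classical first variation of area. Taking $f_t = \phi_f(tk)$ so that $\p_t f_t|_{t=0} = k$ and decomposing $k = Tf.k^\top + k^\bot$ as in \eqref{decomposition}, I would compute $\p_t g_{ij} = 2\bar g(\nabla_i k,\p_j f)$ and, using the Weingarten identity $\bar g((\nabla_X k^\bot)^\top, Tf.Y) = -\bar g(S(X,Y),k^\bot)$ to handle the normal contribution, obtain $\Tr_g(\p_t g) = 2\div^g(k^\top) - 2\bar g(k^\bot,\Tr^g(S))$. Combined with $\p_t\sqrt{\det g_t} = \tfrac{1}{2}\sqrt{\det g_t}\,\Tr_{g_t}(\p_t g_t)$ this yields
\[
\p_t\big|_{t=0}\vol(f_t^*\bar g) = \bigl(\div^g(k^\top) - \bar g(k^\bot,\Tr^g(S))\bigr)\,\vol(f^*\bar g).
\]
Dividing by $\mu$ and using $P_f(0) = \rho(f)$ produces the asserted formula, after the usual translation between functions on $M$ and on $f(M)$ via composition with $f^{\pm 1}$.

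The only non-routine point is the tameness claim, which reduces to Hamilton's composition lemma applied to the smooth map $(g_{ij})\mapsto\sqrt{\det g_{ij}}$ on the open cone of positive-definite matrices; the remaining steps are elementary linear algebra and classical submanifold geometry, and they are essentially the content of the corresponding argument in \cite{Molitor2012} carried over mutatis mutandis.
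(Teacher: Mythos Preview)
Your proposal is correct and is precisely the argument the paper intends: the paper's own ``proof'' consists of the single sentence referring to \cite[Lemma~1.7]{Molitor2012}, and what you have written is a faithful expansion of that reference---local coordinate expression of $\rho$, tameness via Hamilton's composition lemma for the smooth fibre-wise map $\sqrt{\det(\cdot)}$, the first-order PDO structure read off from the jet dependence, and the first variation of area (which is also recorded as Lemma~\nmb!{1.6} in the appendix). There is nothing to add; if anything, your remark about the $\o f^{\pm1}$ being a formal device is worth keeping, since for genuine immersions (as opposed to Molitor's embeddings) that composition is only notational.
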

The proof of this lemma is the same as the proof of \cite[Lemma 1.7]{Molitor2012}.

The second lemma that we will need corresponds to Lemma 1.9 in \cite{Molitor2012}:

\begin{lem}
For any $f \in \Imm^*_{\mu} (M,N)$, the smooth tame map 
$$Q_f : \phi_f (U_f) \rightarrow T_f\Imm(M,N) \oplus C^\infty (M, \R)$$ 
is invertible on an open neighborhood $U$ of the zero section. Its inverse on $U$ is also a smooth tame map. The corresponding statement for finite Sobolev order holds as well.
\end{lem}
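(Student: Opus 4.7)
The plan is to invoke the Nash--Moser inverse function theorem; for the finite Sobolev statement the standard Banach inverse function theorem suffices. Interpreting the first component of $Q_f$ via the projection $P^1$ from Lemma~\ref{lem:decomp:immvol}, the smoothness and tameness of $Q_f$ are immediate: $P^1$ is a smooth fiber-linear vector bundle homomorphism by that lemma, and $h\mapsto P_f(h)-1$ is smooth tame by the preceding lemma. It therefore suffices to check that the Fr\'echet derivative $d_0Q_f$ is a linear isomorphism onto the appropriate target, and that the family of inverses $(dQ_f|_h)^{-1}$ is smooth tame on a neighbourhood of the zero section.

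Since $\rho(f)=1$ at an $f\in \Imm^*_{\mu}(M,N)$, the formula for $d_0P_f$ from the previous lemma collapses to $k\mapsto \div^g(k^\top)-\bar g(k^\bot,\Tr^g(S))$, so that
\begin{equation*}
d_0 Q_f\cdot k = \bigl(P^1(k),\;\div^g(k^\top) - \bar g(k^\bot,\Tr^g(S))\bigr).
\end{equation*}
Given a target $(h_\mu,c)$, I would recover $k$ explicitly by invoking the decomposition of Lemma~\ref{lem:decomp:immvol} and writing $k = h_\mu + Tf.\grad^g(p) + p\,\Tr^g(S)$. The identity $P^1(k)=h_\mu$ then holds automatically, and since $h_\mu\in T_{f,\mu}\Imm(M,N)$ the divergence expression in the second slot reduces to the elliptic equation
\begin{equation*}
\bigl(\Delta^g - \|\Tr^g(S)\|^2_{\bar g}\bigr)(p) = c.
\end{equation*}
Under the non-minimality hypothesis $\Tr^g(S)\neq 0$, this operator was shown in the proof of Lemma~\ref{lem:decomp:immvol} to be a linear isomorphism between the appropriate Sobolev spaces (kernel triviality by Hopf's maximum principle, together with Fredholm index zero), so $p$, and hence $k$, are uniquely determined and $d_0 Q_f$ is inverted explicitly.

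For the neighbourhood version the same recipe works uniformly: for $h$ in a small neighbourhood of $0$ the footpoint $\phi_f^{-1}(h)$ remains non-minimal by continuity, and the inverse of the corresponding $h$-dependent elliptic operator depends smoothly and tamely on $h$. The Nash--Moser theorem then yields a smooth tame local inverse of $Q_f$, while the Sobolev version follows from the Banach inverse function theorem by tracking the two-derivative gain of the elliptic solve against the one-derivative loss in $P_f$. The main obstacle I anticipate is verifying tameness of the inverse family of the $f$-dependent operator $\Delta^g-\|\Tr^g(S)\|^2_{\bar g}$: this ultimately reduces to the same $k$-safeness mechanism for operators with Sobolev coefficients that was invoked in the proof of Theorem~\ref{thm:wellposedness} for the Bochner Laplacian, together with standard tame estimates for the inverse of an invertible elliptic operator family.
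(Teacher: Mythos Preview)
Your reduction at $h=0$ is exactly the paper's: both identify the derivative of $Q_f$ with the second-order operator $\Delta^g-\|\Tr^g(S)\|_{\bar g}^2$ and invoke index zero plus a maximum principle. You economize by citing Lemma~\ref{lem:decomp:immvol} (Hopf's principle), whereas the paper reproves injectivity via the strong maximum principle, using Sard's theorem to produce a smooth level hypersurface so that the principle applies on a domain where the zeroth-order coefficient $-\|\Tr^g(S)\|^2$ is merely $\le 0$ rather than strictly negative. Either route is fine at $h=0$.

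The gap is in your neighbourhood step. For Nash--Moser you need $d_hQ_f$ invertible for \emph{all} small $h$, and that derivative produces the operator $A_f(p)=d_hP_f\bigl(\grad^g(p)+H_f\,p\bigr)$ where $\grad^g$ and $H_f=\Tr^g(S)$ are computed at the \emph{fixed} basepoint $f$, not at the shifted footpoint $\phi_f^{-1}(h)$. So $A_f$ at $h\neq 0$ is \emph{not} the operator $\Delta^{g'}-\|\Tr^{g'}(S')\|^2$ attached to $f'=\phi_f^{-1}(h)$, and invoking ``non-minimality of the new footpoint plus Lemma~\ref{lem:decomp:immvol} there'' does not directly establish its invertibility. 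The paper handles this by observing that for small $X$ the operator $A$ is still second-order elliptic of index zero with nonpositive zeroth-order part, and then running the strong maximum principle argument uniformly in $X$; alternatively one can argue by openness of invertibility among index-zero elliptic operators, but that is the step you should make explicit rather than the footpoint argument you give.
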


\begin{proof}
Writing down the equations one sees directly that $d_X Q_f$ is invertible if and only if $A_f: p \mapsto d_X P_f ({\rm grad} (p) + H_f \cdot p)$ is invertible. Moreover, as in the first lemma, $d_X P_f$ is, for $X$ sufficiently small, elliptic and of index $0$, thus $A$ is, for $X$ sufficiently small, elliptic of order $2$ and of index $0$. Now we can use the strong maximum principle to show injectivity and thus surjectivity: Assume $Pp=0$. Then, as $M$ is connected, $p(M) = [a,b]$ for some $a,b \in \R$. Assume $a \neq b$. Sard's theorem implies that we can find $r \in (a,b)$ such that $p^{-1} (r)$ is a smooth hypersurface of $M$. Now define the codimension-$0$ submanifold-with-boundary $D := p^{-1}( (r, b])$ and choose $d \in \partial D$ and a chart 
 $ u: V \rightarrow W \subset \R^n$ around $d$ with $V \cap U = \emptyset$. Then we apply the strong maximum principle in the following form: Let $L$ be a strictly elliptic operator of second order on functions in an open connected domain $W$ of $\R^n$ with zero order term $L_0 \leq 0$. If $p \in C^2 (W) \cap C^0 (\overline{W})$ with $Lp \geq 0$ in $W$, then $p = \sup \{ p(W) \} $ or $p$ does not attain a nonnegative maximum in $W$. All the assumptions are satisfied by $L=A$, as $M$ is 
compact and therefore $P$ is strictly elliptic, and as $L_0 = A_0 = - \| \Tr^g(S) \|^2 \leq 0$. Therefore $p(W)= \{ p(y) \} $, in contradiction to the fact that $d \in \partial D$. Thus $a=b$, thus $p$ is constant, and as $\vert \on{Tr}(g\i S) \vert^2 $ is positive and does not vanish identically, this implies $p=0$, which, together with the consideration of the index, concludes the proof.
\end{proof}

\begin{proof}[Proof of Theorem \ref{Imm_vol}]
Using the previous lemmas the theorem follows directly by an application of the tame Fr\'echet inverse function theorem. \end{proof}

\section{The $L^2$-geometry}\label{sec:L2}

From here on we will only treat the space $\Imm_{\mu}(M,N)$ and we will equip it with the restriction of the invariant 
$L^2$-metric on the space of all immersions
\begin{equation}
G^{L^2}_f(h,k) = \int_{M} \bar g(h, k)\, \vol(f^*\bar g).
\end{equation}
Since we keep the volume density on $\Imm_{\mu}(M,N)$ constant and since the invariant $L^2$-metric depends only on the 
volume density, the restriction of the non-invariant $L^2$-metric 
\begin{equation}
\bar G_f (h,k) := \int_{M} \bar g(h, k)\, \mu
\end{equation}
to $T\Imm(M,N)|_{\Imm_\mu(M,N)}$
equals the restriction of the invariant metric.
The exponential mapping for $\bar G$ is simply $\big(\exp^{\bar G}_f(h)\big)(x) = \exp^{\bar g}_{f(x)}(h(x))$ and similarly for curvature; see \cite{Kainz84}.
As a first step we want now to consider the orthogonal projection from $T\Imm(M,N)$ to $T\Imm_{\mu}(M,N)$ with respect to the invariant $L^2$ metric which equals the orthogonal projection with respect to $\bar G$.

\begin{thm}\label{thm:projection}
Let $P$ be the mapping  
\begin{align}
\label{proj}
&P_f: T_f\Imm(M,N)\to T_f\Imm(M,N)\\
&P_{f}(X)= X - Tf.\on{grad}^g(p) - p.\Tr^g(S),  
\end{align}
where $p$ is the solution to
\begin{equation}
\label{cond-p}
 \Delta p - \| \Tr^g(S) \|^2  p = \on{div}^g(X^T) - g(X^\perp , \Tr^g(S)) \,.
\end{equation}
Then $P_f$ is the orthogonal projection from $T_f\Imm(M,N)$ to $T_f\Imm_{\mu}(M,N)$
with respect to the invariant $G^{L^2}$-metric.
\end{thm}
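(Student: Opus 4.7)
The plan is to leverage Lemma \ref{lem:decomp:immvol} as the backbone. The decomposition \eqref{p_l2_proj} is precisely $X = h_\mu + Tf.\grad^g(p) + p.\Tr^g(S)$, and the equation for $p$ derived there, namely $(\Delta^g - \|\Tr^g(S)\|^2_{\bar g})(p) = \div^g(X^\top) - \bar g(X^\bot, \Tr^g(S))$, matches \eqref{cond-p} (using that $f \in \Imm_\mu$ so $\vol(f^*\bar g) = \mu$ and consequently $\div^g = \div^\mu$ at $f$). Thus $P_f(X) = h_\mu$ is a well-defined linear map into $T_f\Imm_\mu(M,N)$, and the idempotency $P_f \circ P_f = P_f$ is automatic since elements of $T_f\Imm_\mu(M,N)$ correspond to $p \equiv 0$ (or $p$ constant with $\Tr^g(S) \neq 0$, which again forces $p = 0$). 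Note also that since $\vol(f^*\bar g) = \mu$ along the submanifold, the invariant $G^{L^2}$-metric and the non-invariant $\bar G$ agree on $T\Imm(M,N)|_{\Imm_\mu(M,N)}$, so orthogonality with respect to either is the same statement.

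The core computation is to verify orthogonality: for every $Y \in T_f\Imm_\mu(M,N)$, one must show
\begin{equation}
\int_M \bar g(Tf.\grad^g(p) + p.\Tr^g(S), Y)\, \mu = 0.
\end{equation}
I would split $Y = Tf.Y^\top + Y^\bot$ according to \eqref{decomposition} and observe that the cross terms vanish pointwise: $Tf.\grad^g(p)$ is tangential to $f$, $Y^\bot$ is normal, and $p.\Tr^g(S)$ is normal while $Tf.Y^\top$ is tangential, so $\bar g$-pairings across these decompositions are zero. The surviving terms give
\begin{equation}
\int_M g(\grad^g(p), Y^\top)\, \mu + \int_M p \cdot \bar g(\Tr^g(S), Y^\bot)\, \mu.
\end{equation}

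Now I would apply Stokes's theorem / integration by parts to the first summand. Since $\mu = \vol(f^*\bar g)$ on $\Imm_\mu$, we have $g(\grad^g(p), Y^\top)\,\mu = (Y^\top p)\,\mu$ and $\int_M (Y^\top p)\,\mu = -\int_M p \cdot \div^\mu(Y^\top)\,\mu$. Substituting gives
\begin{equation}
-\int_M p \bigl(\div^\mu(Y^\top) - \bar g(Y^\bot, \Tr^g(S))\bigr)\, \mu,
\end{equation}
which vanishes identically because $Y \in T_f\Imm_\mu(M,N)$ satisfies exactly the defining condition \eqref{tangenspace_Immvol}. This establishes orthogonality, and together with $P_f(X) \in T_f\Imm_\mu(M,N)$ and the idempotency, $P_f$ is the orthogonal projection.

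The main obstacles are not really obstacles but bookkeeping: checking that the equation \eqref{cond-p} is solvable in the correct function space (handled by the ellipticity/index argument and the Hopf maximum principle already recorded in the proof of Lemma \ref{lem:decomp:immvol}, distinguishing $\Tr^g(S) \neq 0$ from the minimal case), and the careful identification $\div^g = \div^\mu$ along the submanifold, which makes the integration by parts consistent with the defining condition of $T_f\Imm_\mu(M,N)$. Everything else is an immediate pointwise or integration-by-parts manipulation.
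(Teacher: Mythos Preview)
Your proposal is correct and follows essentially the same route as the paper: verify that $P_f(X)$ lands in $T_f\Imm_\mu(M,N)$ (you invoke Lemma~\ref{lem:decomp:immvol}, the paper checks the defining equation directly) and then establish $G^{L^2}$-orthogonality by the identical tangential/normal split followed by integration by parts against the condition \eqref{tangenspace_Immvol}. Your framing is slightly more explicit---you pair the complement against an arbitrary $Y\in T_f\Imm_\mu(M,N)$ and note idempotency---but the computation and the ideas coincide.
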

\begin{proof}
We first show, that the mapping $P$ has values in the correct space. Therefore we check the determining equation of $T_f\Imm_{\mu}(M,N)$:
\begin{align*}
 &\operatorname{div}^g(P_f(X)^{\top})-\bar g(P_f(X)^{\bot},\operatorname{Tr}^g(S))\\&\qquad= \operatorname{div}^g(X^{\top})-\bar g(X,\operatorname{Tr}^g(S))-
  \operatorname{div}^g(\operatorname{grad}^g(p))+\bar g(p.\operatorname{Tr}^g(S),\operatorname{Tr}^g(S))\;,
\end{align*}
which vanishes by the definition of the function $p$. For the $L^2$-orthogonality, we compute 
\begin{align*}
&G^{L^2}_f\left(X_{\mu}, Tf.\grad^g (p) + p \cdot \Tr^g (S)\right)\\&\qquad=
\int_{M}\bar g\left( X_{\mu}, Tf.\grad^g (p) + p \cdot \Tr^g (S)\right) \on{vol}(f^*\bar g)\\
&\qquad=\int_{M}\bar g\left( Tf.X_{\mu}^{\top}, Tf.\grad^g (p)\right)\on{vol}(f^*\bar g)+\int_{M} p.\bar g\left( X_{\mu}^\bot,\Tr^g (S)\right) \on{vol}(f^*\bar g)\\
&\qquad = \int_{M} g\left(X_{\mu}^{\top},\grad^g (p)\right)\on{vol}(f^*\bar g)+\int_{M} p.\bar g\left( X_{\mu}^\bot,\Tr^g (S)\right) \on{vol}(f^*\bar g)\\
&\qquad = -\int_{M} \on{div}^g(X_{\mu}^{\top}).p \on{vol}(f^*\bar g)+\int_{M} p.\bar g\left( X_{\mu}^\bot,\Tr^g (S)\right) \on{vol}(f^*\bar g).\\
\end{align*}
Here the last step consists of an integration by parts.
Using the characterization for the tangent space $T_f \on{Imm}_{\mu}(M,N)$ we obtain:
\begin{equation}
G^{L^2}_f\left(X_{\mu}, Tf.\grad^g (p) + p \cdot \Tr^g (S)\right)=0\,.\qedhere
\end{equation}
\end{proof}

For the space of volume preserving diffeomorphisms ($M=N$), Ebin and Marsden have showed 
that the projection extends smoothly to Sobolev completions of high enough order. They then used this result 
to conclude the smoothness of the geodesic spray and obtained as a consequence the local well-posedness of the geodesic equation. 
However, it turns out that the smoohtness of the projection is not true anymore in our situation:
\begin{lem}
For $M\neq N$ and any $k\in \mathbb R$ 
the projection $P$ is not a continuous map on the Sobolev completions of order $k$ 
\begin{equation}
P: \Imm^{k}(M,N)\times T \Imm^{k}(M,N) \rightarrow T \Imm^{k}_{\mu}(M,N)\;.
\end{equation}
\end{lem}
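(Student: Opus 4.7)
The plan is to track the Sobolev regularity of each piece in the formula defining $P_f(X)$ and to show that, for a generic $f\in H^k$, the normal part of the correction already fails to lie in $H^{k-1}$, which is the regularity that normal parts of $H^k$ vector fields along $f$ must have. This rules out $P_f(X)\in H^k$ altogether, so the map $P$ cannot be continuous (indeed is not even well-defined) as a map into $T\Imm^k_\mu(M,N)\subset H^k$.

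The regularity bookkeeping would go as follows. For $f\in\Imm^k(M,N)$ with $k>d/2+1$ one has $g=f^*\bar g\in H^{k-1}$ and $S^f=\nabla Tf\in H^{k-2}$, so $\Tr^g(S)\in H^{k-2}$, and these bounds are sharp for generic $f\in H^k\setminus H^{k+1}$. The pointwise normal projection depends on $Tf\in H^{k-1}$, so the tangential and normal parts of $X\in H^k(f^*TN)$ lie only in $H^{k-1}$. The right-hand side of \eqref{cond-p} thus sits in $H^{k-1}$, and elliptic inversion of $D=\Delta^g-\|\Tr^g(S)\|^2$ (with $H^{k-2}$ coefficients; use the $k$-safeness arguments of \cite{Mueller2015}) yields $p\in H^{k+1}$, and no better generically. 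Then $Tf\cdot\grad^g(p)$ is pointwise tangential and lies in $H^{k-1}$, while $p\cdot\Tr^g(S)$ is pointwise normal and lies only in $H^{k-2}$ (product of $H^{k+1}$ and $H^{k-2}$), and this last bound is sharp. Consequently the normal part of $P_f(X)$ is $X^\perp-p\cdot\Tr^g(S)\in H^{k-2}\setminus H^{k-1}$ for generic data, violating the $H^{k-1}$ lower bound on the normal part of any $H^k$ section, so $P_f(X)\notin H^k$.

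For an explicit witness I would specialize to $M=S^1$, $N=\R^2$: fix a smooth closed curve $f_0$ of nowhere vanishing curvature with smooth unit normal $\nu_0$, and set $f_n=f_0+\eta_n\nu_0$ for a sequence $\eta_n\in H^k(S^1)\setminus H^{k+1}(S^1)$ converging to $0$ in $H^k$ while saturating the $H^{k-2}$-regularity of $\eta_n''$. Then $\Tr^{g_n}(S^{f_n})$ contains a piece proportional to $\eta_n''$ that sits in $H^{k-2}\setminus H^{k-1}$; picking any smooth $X$ for which the associated $p_n$ is not identically zero produces $p_n\cdot\Tr^{g_n}(S^{f_n})\in H^{k-2}\setminus H^{k-1}$ and hence $P_{f_n}(X)\notin H^k$, exhibiting the failure of continuity along this sequence.

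The main technical point is ensuring that the $H^{k-2}$ defect carried by $\Tr^g(S)$ is really felt by $P_f(X)$ and not absorbed by the other terms. The tangent/normal orthogonality $f^*TN=Tf(TM)\oplus\Nor(f)$ settles this cleanly: $Tf\cdot\grad^g(p)$ contributes only to the tangential summand, while $p\cdot\Tr^g(S)$ is purely normal, so no tangential $H^{k-1}$ term can compensate the normal $H^{k-2}$ obstruction.
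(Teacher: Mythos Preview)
Your argument is correct and rests on the same observation as the paper's: the correction $p\cdot\Tr^g(S)$ inherits only the $H^{k-2}$ regularity of the mean curvature and, being purely normal, cannot be cancelled by the tangential term $Tf\cdot\grad^g(p)$. The paper takes a shortcut you do not: it chooses a normal $h$ with $\bar g(h,\Tr^g(S))=c\,\|\Tr^g(S)\|^2$ for a constant $c$, which forces $p\equiv c$ and collapses the whole correction to $c\,\Tr^g(S)$, making the regularity defect immediate without any bookkeeping. Your route---generic Sobolev counting plus the tangent/normal splitting to isolate the offending term---is less direct but actually sturdier: the paper's suggested witness $h:=c\,\Tr^g(S)$ lies only in $H^{k-2}$ and hence not in the stated domain $T\Imm^k$, whereas your argument applies to genuinely smooth $X$. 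One point to tighten: from ``$p_n$ not identically zero'' you cannot yet conclude $p_n\cdot\Tr^{g_n}(S^{f_n})\notin H^{k-1}$; you need $p_n$ bounded away from zero on a neighbourhood carrying the $H^{k-2}$ roughness of $\Tr^{g_n}(S^{f_n})$. In your $S^1$ example this is easy to arrange---choose $X$ so that the $p$ associated with the smooth limit $f_0$ is nowhere zero, and use continuity of $f\mapsto p$ in $H^k$ to keep $p_n$ uniformly away from zero.
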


\begin{rem*}
Note that for high enough $k$ the projection $P_f$ is smooth for a fixed foot point $f \in   \Imm^{k+2}_{\mu}(M,N)$; i.e., seen as a map
\begin{equation}
P_f:   T_f \Imm^{k}(M,N) \rightarrow T_f \Imm^{k}_{\mu}(M,N).
\end{equation}
This is in accordance with the the results of \cite{Preston2011,Preston2012} for the space of arclength-parametrized curves.
\end{rem*}

\begin{proof}
The non-smoothness of the projection follows immediately from the appearance of the term $\Tr^g(S)$ in the definition of the projection. 
This term contains second derivatives of the foot point $f$, which entails the last assertion. To see this 
take any $f \in H^k$ with $0  \neq \Tr^g (S) \in H^{k-2} $ but not in $H^{k}$ . Then it is easy to find $h= h^\perp $ such that there is a real number $c$ with $ c \cdot \vert \vert \Tr^g (S) \vert \vert^2 = g(h, \Tr^g (S) ))  $, e.g., by choosing $h := c \cdot \Tr^g (S)$. 
Uniqueness then implies that $p=c$, so if $P_f(h) \in H^k $, then $c \cdot  \Tr^g(S) = h - P_f(H) \in H^k  $, which yields a contradiction. 
\end{proof}

\subsection{The geodesic equation}\label{sec:L2:geodesic}

In the following we want to calculate the geodesic equation on the space of volume preserving immersions. To do this, we first 
calculate the covariant derivative of the $L^2$--metric on $\on{Imm}_{\mu}(M,N)$. Therefore, we will use the same method as in \cite{Preston2012}. We shall also use 
$\nabla^{\bar G}$, the Levi-Civita covariant derivative on $\on{Imm}$ for the non-invariant metric $\bar G$ which coincides with the covariant derivative induced by the Levi-Civita covariant derivative $\nabla^{\bar g}$ of the metric $\bar g$ on $N$; see 
\cite[3.7]{Bauer2011b} and \cite{Kainz84}. 
\begin{thm}
The covariant derivative of the $L^2$-metric on $\on{Imm}_{\mu}(M,N)$ is given by
$$\left\{\begin{aligned}
\nabla_{U} V &= \nabla^{\bar G}_U V -  Tf.\on{grad}^g(p) - p.\on{Tr}^g(S),\\
\left(\Delta  - \| \Tr^g(S) \|^2\right)  p &= \on{Tr}\left(\partial_t\left(g^{-1}\right)\langle \nabla^{g} V,Tf   \rangle\right) \\
&-\on{Tr}\left(g^{-1}\langle \partial_t\left( \nabla^{g(t)}\right) V,Tf   \rangle\right)  -\on{Tr}\left(g^{-1}\langle \nabla^{g} V,\nabla_t\left(Tf(t)\right)   \rangle\right)   \,.
\end{aligned}\right.$$
\noindent where $f(t)$ is a curve in $\on{Imm}_\mu (M,N)$ with $\partial_t f (0) = U (f)$, and where, for a bilinear form $H $ on $T_{f_0}\Imm(M,N)$ we use the short-hand notation $H \langle \cdot, \cdot \rangle =  H \otimes \overline{g} $ defining a bilinear form on $ TM^* \otimes f^* TN$, i.e. for vector fields along $f$. Note that the right-hand side of the second equation contains no $t$-derivative of $f_t$. Therefore the same is true for the first equation.
\end{thm}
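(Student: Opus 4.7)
The plan is to realise $\nabla$ as the orthogonal projection of an ambient Levi-Civita connection onto $T\Imm_\mu(M,N)$. I would use the non-invariant $L^2$-metric $\bar G$ rather than $G^{L^2}$ as the ambient metric: its Levi-Civita is just the pulled-back $\nabla^{\bar g}$ (as recalled in the text), and since $\bar G$ and $G^{L^2}$ agree on $T_f\Imm(M,N)\times T_f\Imm(M,N)$ whenever $f\in\Imm_\mu(M,N)$, they induce the same metric on the submanifold and the same orthogonal projection $P_f$, namely the one computed in Theorem \ref{thm:projection}. Thus $\nabla_U V = P_f(\nabla^{\bar G}_U V)$, which yields at once the first equation and the elliptic equation
\begin{equation*}
 (\Delta - \|\Tr^g(S)\|^2)\,p = \on{div}^g\!\bigl((\nabla^{\bar G}_U V)^T\bigr) - \bar g\bigl((\nabla^{\bar G}_U V)^\perp,\Tr^g(S)\bigr).
\end{equation*}
The substance of the theorem is to rewrite this right-hand side in the displayed form, which in particular contains no second $t$-derivative of $f$.

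For the rewriting, the key input is the pointwise identity
\begin{equation*}
 \on{div}^g(h^T) - \bar g(h^\perp,\Tr^g(S)) = \on{Tr}\!\bigl(g^{-1}\langle \nabla^g h, Tf\rangle\bigr),
\end{equation*}
valid for every $h\in\Gamma(f^*TN)$. I would verify this in a local $g$-orthonormal frame $\{e_i\}$ by splitting $h=Tf.h^T + h^\perp$ and applying the Gauss formula $\nabla^g_{e_i}(Tf.h^T)=Tf.\nabla^g_{e_i}h^T + S(e_i,h^T)$, the Weingarten formula for $\nabla^g h^\perp$, and the identity $\bar g(A_\xi X,Y)=\bar g(S(X,Y),\xi)$: the tangential contribution produces $\on{div}^g(h^T)$, and the normal contribution produces $-\bar g(h^\perp,\Tr^g(S))$.

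I then apply the identity to the vector field $V(f(t))\in T_{f(t)}\Imm_\mu(M,N)$ along the curve $f(t)$. The defining condition of $T_{f(t)}\Imm_\mu(M,N)$ is exactly the vanishing of this trace, so
\begin{equation*}
 \on{Tr}\!\bigl(g(t)^{-1}\langle \nabla^{g(t)} V(f(t)), Tf(t)\rangle\bigr) \equiv 0.
\end{equation*}
Differentiating at $t=0$ by the Leibniz rule yields four terms, one for each $t$-dependence: $g^{-1}$, the connection $\nabla^{g(t)}$, the argument $V(f(t))$ whose $t$-derivative is $\nabla_t V = \nabla^{\bar G}_U V$, and $Tf(t)$. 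Reading the identity backwards on the term containing $\nabla^g(\nabla^{\bar G}_U V)$ turns it into $\on{div}^g((\nabla^{\bar G}_U V)^T)-\bar g((\nabla^{\bar G}_U V)^\perp,\Tr^g(S))$, and solving the resulting relation for this quantity yields precisely the right-hand side stated in the theorem. The main obstacle is the bookkeeping of this Leibniz rule: in particular, $\partial_t(\nabla^{g(t)})$ must be handled as the $(1,2)$-tensor field obtained by differentiating the Christoffel symbols of $g(t)$, rather than as a genuine connection, and sign conventions for $\partial_t g^{-1}$ versus $\partial_t g$ have to be tracked. Once the computation is set up correctly, the remarked absence of $f_{tt}$ on the right-hand side is automatic, because none of the four terms produced by the Leibniz rule involves a second $t$-derivative of $f$.
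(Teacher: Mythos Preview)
Your proposal is correct and follows essentially the same route as the paper: both obtain $\nabla_U V$ as the $\bar G$-orthogonal projection of $\nabla^{\bar G}_U V$ onto $T_f\Imm_\mu(M,N)$ (the paper phrases this via the second fundamental form of the submanifold), identify the right-hand side of the elliptic equation for $p$ with $\Tr\bigl(g^{-1}\langle \nabla^g(\nabla^{\bar G}_U V),Tf\rangle\bigr)$ using the volume-form variation identity, and then differentiate the vanishing constraint $\Tr\bigl(g(t)^{-1}\langle \nabla^{g(t)} V,Tf(t)\rangle\bigr)=0$ by the Leibniz rule to produce the three displayed terms. The only cosmetic difference is that you spell out the Gauss--Weingarten verification of the trace identity, which the paper instead imports from its appendix lemma on $D_{(f,h)}\vol(g)$.
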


\begin{proof}
Using the submanifold structure of  $\on{Imm}_{\mu}(M,N)$ the covariant derivative can be calculated as
\begin{align}
\nabla_{\partial_t} f_t =  \nabla^{\Imm}_{\partial_t} f_t  - S^{\on{Imm}_{\mu}}(f_t,f_t),
\end{align}
here $S^{\on{Imm}_{\mu}}(f_t,f_t)$ denotes the second fundamental form of $$\on{Imm}_{\mu}(M,N)\subset \left(\on{Imm}(M,N),\bar G\right)\,.$$
We follow closely the proof of \cite{Preston2012} to calculate the second fundamental form.
Let  $U$ and $V$ be vector fields  on $\on{Imm}_{\mu}(M,N)$, with value $u$ and $v$ when evaluated at $\gamma$. Then the second fundamental form 
is given by
\begin{align}
S^{\on{Imm}_{\mu}}(u,v)= ((\nabla^{\bar G}_U V)_{\gamma})^{\bot},
\end{align}
here $(\cdot)^{\bot}$ denotes the orthogonal projection onto the normal bundle with respect to both the invariant metric $G^{L^2}$ or the non-invariant metric $\bar G$ which coincide along $\Imm_{\mu}$.

Now let $f(t)$ be a curve of volume preserving immersions with $f(0)=\gamma$ and let $V(t)\in T_{f(t)}\on{Imm}_{\mu}(M,N)$ be a curve along $f(t)$. 

It remains to calculate the orthogonal projection of $V_t(0)$. 
To shorten the notation we will write $f=f(0)$. 
Using the formula for the projection of Thm.~\ref{thm:projection}, we obtain that
\begin{align}
(V_t(0))^{\bot} = Tf. \on{grad}^{g} p_{uv} + p_{uv}.\on{Tr}^g(S), 
\end{align}
where $p_{uv}$ is the solution to
\begin{align}
 (\Delta  - \| \Tr^g(S) \|^2 ) p_{uv} = \on{Tr}\left(g(t)^{-1}\langle \nabla^{g(t)} V_t,Tf(t)   \rangle\right)|_{t=0}.
\end{align}
Here we prefer the second last expression of Lemma \ref{partialtvolume} to the last one; the reason is that the term 
$\operatorname{div}^g f_t^T$ contains $t$-derivatives of $f_t$ due to the presence of a term $(Tf)^{-1}$ because of Equation \ref{decomposition}. In the case of curves, the metric $g$ was independent of the time $t$. In the higher dimensional case this is not true anymore.
Since $V(t) \in T_f\on{Imm}_{\mu}(M,N)$ we have:
\begin{align}
\on{Tr}\left(g(t)^{-1}\langle \nabla^{g(t)} V,Tf(t)   \rangle\right)=0 ,
\end{align}
for all $t$. Taking the derivative of this yields, using the product rule for $\partial_t$ and torsion-freeness of the pull-back covariant derivative,
\begin{align}
&\on{Tr}\left(g(t)^{-1}\langle \nabla^{g(t)} V_t,Tf(t)   \rangle\right)
=-\on{Tr}\left(\partial_t\left(g(t)^{-1}\right)\langle \nabla^{g(t)} V,Tf(t)   \rangle\right)  
\\&\qquad-\on{Tr}\left(g(t)^{-1}\langle \partial_t\left( \nabla^{g(t)}\right) V,Tf(t)   \rangle\right)  -\on{Tr}\left(g(t)^{-1}\langle \nabla^{g(t)} V,\nabla_t\left(Tf(t)\right)   \rangle\right)  \qedhere
\end{align}
\end{proof}

We are now able to write down the formula of the geodesic equation on the space of volume preserving immersions. 
To simplify the presentation we will only write the geodesic equation for the special case $N= \mathbb R^n$:
\begin{thm}
The geodesic equation of the $L^2$-metric on $\on{Imm}_{\mu}(M,\R^n)$ is given by
$$\left\{\begin{aligned}
f_{tt} &= Tf.\on{grad}^g(p) + p.\on{Tr}^g(S),\\
\left(\Delta  - \| \Tr^g(S) \|^2\right)  p &= \on{Tr}\left(\partial_t\left(g(t)^{-1}\right)\langle \nabla^{g} f_t,Tf   \rangle\right) \\
&\qquad-\on{Tr}\left(g^{-1}\langle \partial_t\left( \nabla^{g(t)}\right) f_t,Tf   \rangle\right)  -\on{Tr}\left(g^{-1}\langle \nabla^{g} f_t,\nabla_t\left(Tf(t)\right)   \rangle\right)   \,.
\end{aligned}\right.$$
\end{thm}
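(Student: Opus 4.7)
The theorem is an immediate corollary of the covariant derivative formula established in the preceding theorem: the plan is to take the target $N = \mathbb R^n$, set $U = V = f_t$, and read off the system from $\nabla_{f_t} f_t = 0$.

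The first step is to observe that for $N = \mathbb R^n$ with the flat Euclidean metric, the ambient non-invariant Levi-Civita connection $\nabla^{\bar G}$ on $\Imm(M,\mathbb R^n)$ is, as recalled in Section \ref{sec:L2:geodesic}, induced pointwise by the flat connection on $\mathbb R^n$, and hence reduces along any smooth curve $t\mapsto f(t)$ to ordinary time differentiation. In particular $\nabla^{\bar G}_{\partial_t} f_t = f_{tt}$, viewed as a section of the trivial bundle $f^*T\mathbb R^n$.

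Next I would substitute $U = V = f_t$ into the covariant derivative formula of the preceding theorem, extending $f_t$ arbitrarily to a vector field on $\Imm_\mu$ with the prescribed values along the curve. The geodesic equation $\nabla_{f_t} f_t = 0$ then reads
\begin{equation*}
0 = f_{tt} - Tf.\grad^g(p) - p.\Tr^g(S),
\end{equation*}
which is the first equation of the system, while the elliptic equation for $p$ from the preceding theorem with $V$ replaced by $f_t$ gives the second. The remark following that theorem states that its right-hand side contains no time derivative of $f_t$, so the substitution $V = f_t$ produces an elliptic equation in $p$ whose coefficients depend only on $f$ and $f_t$, hence a genuine constraint rather than a component of the equation of motion.

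The only subtlety worth spelling out is that the right-hand side of the $p$-equation must not depend on the chosen extension of $f_t$. This is immediate from the derivation in the preceding proof, where that formula was obtained by differentiating in $t$ the identity $\on{Tr}(g(t)^{-1}\langle \nabla^{g(t)} V, Tf(t)\rangle) \equiv 0$ along the curve; both this identity and its $t$-derivative involve only the values of $V$ along $t\mapsto f(t)$, which by construction equal $f_t$. The main point of the proof is thus purely bookkeeping, and no genuinely new estimates are needed beyond those already established.
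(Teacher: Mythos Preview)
Your proposal is correct and follows essentially the same route as the paper: both arguments specialize the covariant derivative formula of the preceding theorem to $N=\mathbb R^n$, use flatness of $(\Imm(M,\mathbb R^n),\bar G)$ to identify $\nabla^{\bar G}_{\partial_t} f_t$ with $f_{tt}$, and then read off the system from $\nabla_{f_t}f_t=0$. Your additional remark on independence from the extension of $f_t$ is a welcome clarification, but the core argument is identical to the paper's.
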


\begin{proof}
To obtain the formula for the geodesic equation we need to calculate the covariant derivative 
in the ambient space $(\on{Imm}(M,\R^n),\bar G)$ of $V$ in direction $u=f_t(0)$:
\begin{align}
\left(\nabla^{\bar G}_u v\right)_{\gamma} = V_t(0). 
\end{align}
Here we used the flatness of the space $\left(\on{Imm}(M,\R^n),\bar G\right)$ and the identification of $T_x \R^n $ with $\R^n$.
\end{proof}

In Sect.~\ref{sec:ex:curves} we will show, that this equation simplifies to the equation of \cite{Preston2011} for the special case $M=S^1$, $N=\mathbb R^2$.

\section{Higher order metrics}\label{sec:Hk}

In this part we consider the restriction of higher order Sobolev metrics
\begin{equation}
 G^L_f(h,k) = \int_{M} \bar g( (1+\Delta_g^l) h, k)\, \vol(f^*\bar g):= \int_{M} \bar g( L_f h, k)\, \mu,
\end{equation}
to the space of volume preserving immersions.
Since the volume form remains constant we equivalently write these metrics as
\begin{equation}
 G^L_f(h,k) = \int_{M} \bar g( (1+\Delta_g^l) h, k)\, \mu,
\end{equation}
For $l=0$ this equals the $L^2$-metric from Section~\ref{sec:L2}.

Similar as for the $L^2$-metric we are interested in the orthogonal projection to $T_f\Imm_{\mu}(M,N)$ also for these higher order metrics. Therefore we need to introduce the operator
$\Psi$:
\begin{align*}
&\Psi_f: C^{\infty}(M)\to C^{\infty}(M)\\
&\Psi_f(p)=\on{div}^g \left(\left(L^{-1}\left(Tf.\on{grad}^g(p)  + p.\Tr^g(S)\right)\right)^\top \right)\\
 &\qquad\qquad-\bar g\left (L^{-1}\left(Tf.\on{grad}^g(p) + p.\Tr^g(S)\right), \Tr^g(S)\right)\;.
\end{align*}

In the next lemma we collect some basic properties for the operator $\Psi^L_f$, that we will later use to prove the existence of the orthogonal projection.
\begin{lem}
Let $L$ be an elliptic positive $L^2$-self-adjoint pseudo differential operator of order $l$. 
Then the operator $\Psi^L_f$ is an elliptic and $L^2$-selfadjoint pseudo differential operator of order $2-2l$.  
\end{lem}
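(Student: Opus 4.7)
The plan is to rewrite $\Psi_f$ as a symmetric composition of three simpler operators, from which all three assertions (order, ellipticity, self-adjointness) follow by routine symbol calculus. Let
\begin{equation*}
T_f\colon C^\infty(M)\to \Gamma(f^*TN),\qquad T_f(p)=Tf.\grad^g(p)+p\cdot\Tr^g(S),
\end{equation*}
which is a first-order linear differential operator. I would first compute its formal adjoint $T_f^*$ with respect to the natural $L^2$-pairings (the measure $\mu$ on $C^\infty(M)$, and $\bar g$ paired with $\mu=\vol(f^*\bar g)$ on $\Gamma(f^*TN)$). Integration by parts, using the orthogonal decomposition $X=Tf.X^\top+X^\bot$ together with $\bar g(Tf(Y),X)=g(Y,X^\top)$, gives
\begin{equation*}
\langle T_f(p),X\rangle=\int_M g(\grad^g(p),X^\top)\,\mu+\int_M p\,\bar g(\Tr^g(S),X)\,\mu=\langle p,T_f^*(X)\rangle
\end{equation*}
with
\begin{equation*}
T_f^*(X)=-\div^g(X^\top)+\bar g(X,\Tr^g(S)).
\end{equation*}
Comparing with the defining formula for $\Psi_f$ yields at once the factorisation $\Psi_f=-T_f^*\circ L^{-1}\circ T_f$.

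From this factorisation the three claims are essentially automatic. Self-adjointness follows from the identity $\Psi_f^*=-(T_f^*L^{-1}T_f)^*=-T_f^*(L^{-1})^*T_f=\Psi_f$, using that $L^{-1}$ inherits $L^2$-self-adjointness from $L$. The order assertion follows from the symbolic calculus for pseudodifferential operators: $T_f$ and $T_f^*$ are differential operators of order $1$, while $L^{-1}$ is a pseudodifferential operator of order $-2l$ (interpreting the statement so that $L$ has the order of $(1+\Delta_g)^l$, i.e.\ $2l$), and the orders add under composition.

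For ellipticity I would compute the principal symbol directly. The principal symbol of $\grad^g$ at $(x,\xi)$ is $c\mapsto i c\,\xi^\sharp$, hence
\begin{equation*}
\sigma_1(T_f)(x,\xi)(c)=i c\cdot Tf(\xi^\sharp)\in T_{f(x)}N,
\qquad
\sigma_1(T_f^*)(x,\xi)(X)=-i\,\bar g\bigl(X,Tf(\xi^\sharp)\bigr).
\end{equation*}
Since $L$ is elliptic and positive its principal symbol is $a(x)|\xi|_g^{2l}$ for some positive $a$, so $\sigma_{-2l}(L^{-1})(x,\xi)=a(x)^{-1}|\xi|_g^{-2l}\cdot\Id$. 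Composing,
\begin{equation*}
\sigma_{2-2l}(\Psi_f)(x,\xi)(c)
=-\bigl(-i\bar g(i c\,Tf(\xi^\sharp),Tf(\xi^\sharp))\bigr)a(x)^{-1}|\xi|_g^{-2l}
=-a(x)^{-1}|\xi|_g^{2-2l}\,c,
\end{equation*}
using $\bar g(Tf(\xi^\sharp),Tf(\xi^\sharp))=|\xi|_g^2$. This is nonvanishing for $\xi\neq 0$, establishing ellipticity.

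The only genuine point of care is the adjoint computation: one must integrate by parts against the intrinsic volume $\vol(g)$, and here this is legitimate because on $\Imm_\mu^*(M,N)$ the measures $\mu$ and $\vol(f^*\bar g)$ coincide, so no Radon–Nikodym factor enters. Everything else is formal manipulation with principal symbols and composition of pseudodifferential operators.
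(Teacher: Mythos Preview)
Your argument is correct and follows essentially the same route as the paper: both prove self-adjointness via the integration-by-parts identity underlying your computation of $T_f^*$, and both prove ellipticity by multiplicativity of principal symbols on a three-fold factorisation. Your presentation is in fact tighter: you make the factorisation $\Psi_f=-T_f^*\circ L^{-1}\circ T_f$ explicit from the outset and read off self-adjointness in one line, whereas the paper unrolls the same computation term by term and only records the factorisation $\Psi_f^L=A_f\circ L_f^{-1}\circ A_f^*$ (with $A_f=-T_f^*$) later, in the proof of the subsequent theorem. One small caveat: your claim that the principal symbol of a general positive elliptic self-adjoint $L$ is the scalar $a(x)|\xi|_g^{2l}\cdot\Id$ is stronger than what the hypotheses give---the symbol is a priori only a positive-definite endomorphism of $T_{f(x)}N$---but your ellipticity computation goes through verbatim once you replace $a(x)^{-1}|\xi|_g^{-2l}$ by $\bar g\bigl(\sigma_{-2l}(L^{-1})(\xi)\,Tf(\xi^\sharp),\,Tf(\xi^\sharp)\bigr)$, which is nonzero by positive-definiteness. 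The paper itself computes the symbol only for $L=(1+\Delta)^l$ and asserts the general case without detail, so you are no less rigorous on this point.
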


\begin{proof}
Let $q\in C^{\infty}(M)$. We have
 \begin{align*}
 &\int_M \Psi_f(p).q \operatorname{vol}(f^*\bar g)= \int_M \on{div}^g \left(\left(L^{-1}\left(Tf.\on{grad}^g(p)  + p.\Tr^g(S)\right)\right)^\top \right).q\\
 &\qquad\qquad-\bar g\left (L^{-1}\left(Tf.\on{grad}^g(p) + p.\Tr^g(S)\right), \Tr^g(S)\right).q \operatorname{vol}(f^*\bar g)\\&=
 \int_M -g\left( \left(\left(L^{-1}\left(Tf.\on{grad}^g(p)  + p.\Tr^g(S)\right)\right)^\top \right), \on{grad}^g(q)\right)\\
  &\qquad\qquad-\bar g\left (Tf.\on{grad}^g(p) + p.\Tr^g(S), L^{-1}(q.\Tr^g(S))\right) \operatorname{vol}(f^*\bar g)\\&=
  \int_M -\bar g\left(L^{-1}\left(Tf.\on{grad}^g(p)  + p.\Tr^g(S)\right) , Tf.\on{grad}^g(q)\right)\\
  &\qquad\qquad-\bar g\left (Tf.\on{grad}^g(p) + p.\Tr^g(S), L^{-1}(q.\Tr^g(S))\right) \operatorname{vol}(f^*\bar g)\\&=
  \int_M -\bar g\left(Tf.\on{grad}^g(p)  + p.\Tr^g(S) ,L^{-1}( Tf.\on{grad}^g(q))\right)\\
  &\qquad\qquad-\bar g\left (Tf.\on{grad}^g(p) + p.\Tr^g(S), L^{-1}(q.\Tr^g(S))\right) \operatorname{vol}(f^*\bar g)\\&=
  \int_M -g\left(\on{grad}^g(p)  ,(L^{-1}( Tf.\on{grad}^g(q)))^{\top}\right)-p.\bar g\left(\Tr^g(S) ,L^{-1}( Tf.\on{grad}^g(q))\right)\\
  &\qquad\qquad-g\left (\on{grad}^g(p), (L^{-1}(q.\Tr^g(S)))^{\top}\right)
  -p\bar g\left (\Tr^g(S), L^{-1}(q.\Tr^g(S))\right) \operatorname{vol}(f^*\bar g)
  \\&=
  \int_M p.\operatorname{div}\left((L^{-1}( Tf.\on{grad}^g(q)))^{\top}\right)-p.\bar g\left(\Tr^g(S) ,L^{-1}( Tf.\on{grad}^g(q))\right)\\
  &\qquad\qquad+p \operatorname{div}\left((L^{-1}(q.\Tr^g(S)))^{\top}\right)
  -p\bar g\left (\Tr^g(S), L^{-1}(q.\Tr^g(S))\right) \operatorname{vol}(f^*\bar g)\\
  &=
  \int_M p.\operatorname{div}\left((L^{-1}( Tf.\on{grad}^g(q)+q.\Tr^g(S)))^{\top}\right)\\&\qquad\qquad
  -p.\bar g\left(\Tr^g(S) ,L^{-1}( Tf.\on{grad}^g(q)+q.\Tr^g(S))\right)
  \\&= \int_M p.\Psi_f(q) \operatorname{vol}(f^*\bar g)
 \end{align*}
This proves that the operator is selfadjoint with respect to the $L^2$-metric.

We want to examine ellipticity of the pseudodifferential operator $\Psi$:
\begin{align*}
&\Psi_f: C^{\infty}(M)\to C^{\infty}(M)\\
&\Psi_f(p)=\on{div}^g \left(\left(L^{-1}\left(Tf.\on{grad}^g(p)  + p.\Tr^g(S)\right)\right)^\top \right)\\
 &\qquad\qquad-\bar g\left (L^{-1}\left(Tf.\on{grad}^g(p^l) + p.\Tr^g(S)\right), \Tr^g(S)\right)\;.
\end{align*}
Ellipticity means here that the principal symbol is nondegenerate. Let us calculate the symbol of $\Psi_f$. We use the following definition:

For a fiber-preserving and fiberwise linear pseudodifferential operator $P$ of degree $l$ between vector bundles $\pi_1 $ and $\pi_2$ over a manifold $M$, for $v \in T^*_qM$ and $x \in \pi_1^{-1} (q)$ we take 
\bea
\label{symboldef}
\sigma_P (v) (x) := \lim_{a \rightarrow \infty} \si_{P,a} (v) (x), \qquad  \si_{P,a} (v) (x) :=   a^{-l} e^{-au} P(e^{au} X) ,
\eea  
for any section $X$ of $\pi_1$ with $X(q) = x$ and any $d_q u  = v$. For the following calculation all that is needed is the property that the principal symbol is linear and multiplicative, coincides with the usual one on differential operators and is connected in the usual way to the order of the operator, which represents an algebra homomorphism from the set of pseudodifferential operators to $\R$. 

In our setting, we have  the operators 
\begin{align*}
P_1&:= {\rm grad}^g : C^\infty (M) \rightarrow V (M)\\
P_2 &:= (L^{-1} ( Tf . \cdot ) )^T = U^{-1}: V(M) \rightarrow \Gamma (f^* \tau_N ) \rightarrow V(M) \\
P_3 &:= \operatorname{div}: V(M) \rightarrow C^\infty (M)
\end{align*}
where $U:= (L \o Tf)^T: V(M) \rightarrow V(M)$. Dropping lower-order terms, we see that 
$$\si_{\Psi_f} = \si_{P_3 \o P_2 \o P_1} = \si_{P_3} \o \si_{P_2} \o \si_{P_1}$$ 
using the multiplicativity of $\si$. Now one calculates easily $ \si_{P_1} (v) (x) = \sharp_g (v) x$ for $x \in C^\infty (M)$  and $\si_{P_3} (v) (x) = v(x) $ for $x \in V(M)$. For $P_2$, we use multiplicativity once more to show that $\si_{P_2} = \si_U^{-1}$, and if $ L := (1 + \Delta)^l  $ then, of course $U \neq (\Delta^g)^l$ even if $L = \Delta^l$ but on the level of symbols we do have $\si_U = \si_{(\Delta^g)^l} =  \si_{\Delta^g}^l = g^{l}$, thus $\si_{P_2} = g^{-l}$ and all in all we get
$$\si_{\Psi_f} (v) (x) = g^{-l} (v) \cdot v(x \cdot \sharp_g (v)) = g^{1-l} (v) \cdot x, $$ 
which is indeed nondegenerate. The same holds for general $L$.
\end{proof}

This allows us to define the analogue of the orthogonal projection also for these higher order metrics:
\begin{thm}
Let $f $ be an immersion of Sobolev class $H^s$ and let $P$ be the mapping  
\begin{align}
\label{higher-projection}
&P^L_{f}(X)= X - L^{-1}(Tf.\on{grad}^g(p) + p.\Tr^g(S)),  
\end{align}
where $p$ is the unique solution of
\begin{equation}\label{generalmetric_p}
\Psi_f^L(p) =  \on{div}^g(X^T) - g(X^\perp , \Tr^g(S)) = \Tr^g \big(    \overline{g} (\nabla X, Tf )   \big)
\end{equation}
then $P^L_f(X)$ is the $G^L$-orthogonal projection onto $T_f\Imm_{\mu}(M,N)$. It is linear and smooth.
\end{thm}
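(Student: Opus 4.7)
The plan is to mirror the proof of Theorem \ref{thm:projection}: verify that $P^L_f(X)$ lies in $T_f\Imm_\mu(M,N)$, verify $G^L$-orthogonality of $X - P^L_f(X)$ to that tangent space, and establish that the defining PDE \eqref{generalmetric_p} has a unique solution $p$ depending linearly and smoothly on $X$ and on $f$.

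The first two steps should be essentially formal. Applying the tangent-space characterization \eqref{tangenspace_Immvol} to $P^L_f(X) = X - L^{-1}\bigl(Tf.\grad^g(p) + p.\Tr^g(S)\bigr)$ produces exactly $[\div^\mu(X^\top) - \bar g(X^\perp,\Tr^g(S))] - \Psi^L_f(p)$, which vanishes by the choice of $p$. For orthogonality, pick $Y \in T_f\Imm_\mu(M,N)$; since $L$ and $L^{-1}$ cancel inside the $G^L$-pairing,
\[
G^L_f(X - P^L_f(X),\,Y) \;=\; \int_M \bar g\bigl(Tf.\grad^g(p) + p.\Tr^g(S),\,Y\bigr)\,\mu.
\]
Splitting $Y = Tf.Y^\top + Y^\perp$ and integrating the gradient piece by parts turns the right-hand side into $-\int_M p\bigl(\div^\mu(Y^\top) - \bar g(Y^\perp,\Tr^g(S))\bigr)\,\mu$, which vanishes because $Y \in T_f\Imm_\mu(M,N)$.

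The main obstacle, as I see it, is existence and uniqueness of $p$. By the preceding lemma, $\Psi^L_f$ is elliptic and $L^2$-selfadjoint, so it has index zero on the natural Sobolev scale, and invertibility reduces to triviality of the kernel. To kill the kernel, re-use the self-adjointness integration by parts with $q = p$, which yields
\[
\int_M p\,\Psi^L_f(p)\,\mu \;=\; -\int_M \bar g\bigl(Tf.\grad^g(p) + p.\Tr^g(S),\;L^{-1}(Tf.\grad^g(p) + p.\Tr^g(S))\bigr)\,\mu.
\]
Positivity of $L$, and hence of $L^{-1}$, forces $Tf.\grad^g(p) + p.\Tr^g(S) = 0$ for any $p \in \ker \Psi^L_f$. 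Decomposing into tangential and normal parts gives $\grad^g p = 0$ and $p.\Tr^g(S) = 0$; since $M$ is connected the former makes $p$ constant, and the non-minimality condition $\Tr^g(S) \not\equiv 0$ built into $\Imm^*_\mu$ then forces $p = 0$.

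Finally, linearity of $P^L_f$ in $X$ is immediate from linearity of $\Psi^L_f$ and of the right-hand side of \eqref{generalmetric_p}. Smoothness of $P^L_f$ on the relevant Sobolev completions follows from elliptic pseudodifferential regularity for $(\Psi^L_f)^{-1}$ together with the smooth dependence of $\Psi^L_f$ on the foot point $f$ at sufficiently high Sobolev order, i.e.\ the $k$-safe framework of \cite{Mueller2015} already invoked in the proof of Theorem~\ref{thm:wellposedness}.
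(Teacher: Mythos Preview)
Your proof is correct and follows essentially the same architecture as the paper's: check tangency via the defining equation of $T_f\Imm_\mu$, check $G^L$-orthogonality by cancelling $L$ against $L^{-1}$ and reducing to the $L^2$ computation, and handle solvability of \eqref{generalmetric_p} via ellipticity and selfadjointness of $\Psi_f^L$. The one place where your argument diverges in presentation is the kernel analysis. You compute the quadratic form $\int_M p\,\Psi_f^L(p)\,\mu = -\int_M \bar g\bigl(A_f^*p,\,L^{-1}A_f^*p\bigr)\,\mu$ with $A_f^*p = Tf.\grad^g p + p\,\Tr^g(S)$, invoke positivity of $L^{-1}$ to force $A_f^*p=0$, and then kill $p$ using non-minimality. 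The paper instead writes $\Psi_f^L = A_f\circ L^{-1}\circ A_f^*$, uses the $L^2$-orthogonal splitting $\ker\oplus\im$ for elliptic selfadjoint operators to show the right-hand side always lies in the image, and argues that any ambiguity in $p$ lies in $\ker(L^{-1}\circ A_f^*)$ so that the projection itself is well-defined. These are the same idea in different dress; your route is a bit more direct and makes the role of the non-minimality hypothesis $\Tr^g(S)\not\equiv 0$ explicit, which the paper leaves implicit.

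One caution on your final paragraph: you justify smooth dependence of $(\Psi_f^L)^{-1}$ on the footpoint $f$ by appealing to the $k$-safe framework of \cite{Mueller2015}. That reference treats \emph{differential} operators, whereas $\Psi_f^L$ is genuinely pseudodifferential (it contains $L^{-1}$). The paper explicitly flags the pseudodifferential analogue as open---see Conjecture~\ref{assumption} and the surrounding discussion---and the subsequent well-posedness theorem is stated conditionally on it. The present theorem only asserts that $P_f^L$ is linear and smooth \emph{in $X$ for fixed $f$}, which follows from linearity together with the Sobolev-order bookkeeping the paper sketches; joint smoothness in $(f,X)$ is precisely what remains conjectural.
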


\begin{proof}
Let us show that the equations above indeed well-define a smooth linear projection. The existence of a solution to equation~\eqref{generalmetric_p} follows from the fact that $\Psi_f^L$ is elliptic and selfadjoint. 
The orthogonality of the projection follows similarly as in Sect. \ref{sec:L2} since we have:
\begin{align*}
G^{L}_f\left(X_{\mu}, L\i\left(Tf.\grad^g (p) + p \cdot \Tr^g (S)\right)\right)
=G^{L^2}_f\left(X_{\mu},Tf.\grad^g (p) + p \cdot \Tr^g (S)\right).
\end{align*}

To show well-definedness, we have to show that Eq. \ref{generalmetric_p} has always a solution, and for the difference $q:= p_1 - p_2$ of two solutions $p_1, p_2$ we have $q \in \ker ( L^{-1} (Tf {\rm grad}^g (\cdot) +  \Tr^g (S) \cdot   ))$. As $\Psi_f^L$ is a  elliptic $L^2$-selfadjoint pseudodifferenial operator on a vector bundle $\pi$ over a compact manifold, we know that that $\Gamma(\pi) = \ker (\Psi_f^L) \oplus \im (\Psi_f^L)$, and this decomposition is $L^2$-orthogonal (cf. e.g., Th. III.5.5 in \cite{LawsonMichelsohn} where the statement is made for differential operators instead of pseudodifferential operators. Its proof immediately carries over to all operators satisfying the assumptions in Theorem III.5.2 of that reference, and it is easy to see that $\Psi_f^L$ satisfies them). That means, for $A_f : \operatorname{Vect}_f \rightarrow C^\infty (M)$ defined by $A_f (X) := \div^g (X^T) - g(X , \Tr^g (S) )$, we need to show that $A_f (\operatorname{Vect}_f) \perp  \ker \Psi_f^L$. (Note that $\ker A_f  = T_{f, \mu} \Imm (M,N)$). And indeed, first we have $A_f (\operatorname{Vect}_f)^\perp = \ker (A_f^*)  $ (all adjoints here refer to the $L^2$ metric) and $A_f^* (u) = Tf (\grad^g (u)) - u \Tr^g (S) $. Note that we have $\Psi_f^L = A_f \o L_f^{-1} \o A_f^*$ (which shows again that $\Psi_f^L$ is $L^2$-selfadjoint). If $L_f$ is positive, then it is injective and has a continuous left inverse (which is surjective). If it is moreover elliptic and $L^2$-self-adjoint, then index theory implies that it is bijective and thus an isomorphism. So is $L_f^{-1}$, which is $L^2$-self-adjoint as well.  We want to show $\ker (\Psi_f^L ) = \ker (A_f^*)$. And indeed, as $\im (A_f^*) \perp_{L^2} \ker (A_f)$, $\im(L_f^{-1} A_f^* ) \perp_{W^l} \ker (A_f)$, in particular $\im (L_f^{-1} A_f^* ) \cap \ker (A_f) = \{ 0\} $, thus indeed 
\bea
\label{Kerne}
\ker (\Psi_f^L) = \ker (A_f^*) = \ker (L^{-1} \o A_f).
\eea 
Now let us show the statement above on $q$.
We have $q \in \ker (\Psi_f^L)$, thus Eq. \ref{Kerne} again implies the claim. The continuity of the map can be shown by the usual counting of Sobolev orders taking into account that the right-hand side of Eq.~\eqref{generalmetric_p} is in $H^{k-1}$ (and not in $H^{k-2}$ as the middle term of the chain of equalities in Eq.~\eqref{generalmetric_p} would suggest).

It remains to prove that $P_f^L(X)$ has values in $T_f\Imm_{\mu}(M,N)$. 
Therefore we need to show that
\begin{equation*}
\on{div}^g (P^L_{f}(X)^\top) - \bar g (P^L_{f}(X)^\bot , \Tr^g(S)) = 0.
\end{equation*}
Using the defining equation for $p$ we calculate
\begin{align*}
 &\on{div}^g(P^L_{f}(X)^\top)-  \bar g (P^L_{f}(X)^\bot , \Tr^g(S))\\
 &\qquad=\on{div}^g(X^\top)- \bar g (X^\bot , \Tr^g(S))  - \on{div}^g \left(\left(L^{-1}\left(Tf.\on{grad}^g(p) + p.\Tr^g(S)\right)\right)^\top \right)\\&\qquad\qquad\qquad
 +\bar g\left (L^{-1}\left(Tf.\on{grad}^g(p) + p.\Tr^g(S)\right), \Tr^g(S)\right)
\end{align*}
This yields the differential equation:
\begin{multline*}
 \on{div}^g(X^\top)- \bar g (X^\bot , \Tr^g(S))  = \on{div}^g \left(\left(L^{-1}\left(Tf.\on{grad}^g(p) + p.\Tr^g(S)\right)\right)^\top \right)\\
 -\bar g\left (L^{-1}\left(Tf.\on{grad}^g(p) + p.\Tr^g(S)\right), \Tr^g(S)\right)\;.
\end{multline*}
\end{proof}

Now we would like to show that the projection, extends to a smooth mapping on Sobolev completions of sufficient high order. However, one major building block towards this
result is missing, namely elliptic theory for pseudo differential operators with Sobolev coefficients acting as isomorphism between spaces of Sobolev sections in a certain range of Sobolev orders. For the case of differential operators the relevant results have been proved in \cite{Mueller2015} and used in the proof of Thm.~\ref{thm:wellposedness}.
The proofs for pseudo differential operators will be done in a future paper.
This will lead to the following result:
\begin{conj}\label{assumption}
For each $k>\frac{d}{2}+1$, the operator  $\Psi^L$ depends smoothly on the immersion $f\in\Imm^k(M,N)$ and is invertible as a mapping from
$\Gamma_{H^{k+2-2l}}(f^*TN)$ to $\Gamma_{H^{k}}(f^*TN)$.  
\end{conj}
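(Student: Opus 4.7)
The plan is to reduce the conjecture to two separate questions using the factorization $\Psi_f^L = A_f \circ L_f^{-1} \circ A_f^*$ established in the excerpt, where $A_f$ and $A_f^*$ are first-order differential operators whose coefficients involve $g = f^*\bar g$, its Christoffel symbols and $\Tr^g(S)$, and the only genuinely pseudodifferential piece is $L_f^{-1} = (1+\Delta_g)^{-l}$. The outer pieces are handled by Sobolev multiplier and composition estimates of the sort developed in \cite{Mueller2015}, so the entire burden falls on a sharp analysis of $L_f^{-1}$ with Sobolev coefficients.

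First I would establish mapping properties with smooth dependence on $f$. For $f\in\Imm^k(M,N)$ with $k>d/2+1$ one has $g\in H^{k-1}$, Christoffels in $H^{k-2}$, and $\Tr^g(S)\in H^{k-2}$, so the $k$-safeness framework of \cite{Mueller2015} yields $A_f:H^{s}\to H^{s-1}$ and $A_f^*:H^{s}\to H^{s-1}$ as bounded linear maps depending smoothly on $f$ for an appropriate window of Sobolev orders $s$. For the middle factor I would construct a rough-coefficient parametrix for $L_f=(1+\Delta_g)^l$ on the bundle $f^*TN$ by freezing coefficients locally, patching the resulting classical symbols into a global pseudodifferential operator, and inverting the remainder by a Neumann series in a Marschall--Taylor-type rough symbol class; the output is that $L_f^{-1}:H^{s}\to H^{s+2l}$ is bounded and depends smoothly on $f\in\Imm^k$ for $|s|$ in a suitable window. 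Composing the three factors gives the required $\Psi_f^L:H^{k+2-2l}(M)\to H^{k}(M)$ with smooth $f$-dependence.

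Once the bounded-operator statement is in place, invertibility is essentially already in the excerpt. The principal symbol computation shows $\Psi_f^L$ is elliptic of order $2-2l$, and in a rough-coefficient Fredholm calculus ellipticity plus $L^{2}$-self-adjointness yields Fredholmness with index zero at every Sobolev level. The identity $\ker\Psi_f^L = \ker A_f^{*}$ from \eqref{Kerne} reduces triviality of the kernel to showing that $Tf.\grad^g u - u\cdot\Tr^g(S)=0$ forces $u\equiv 0$; pairing with $\Tr^g(S)$ and invoking the strong maximum principle exactly as in the proof of the second lemma of Section~\ref{Imm_vol} gives this under the hypothesis $\Tr^g(S)\not\equiv 0$ built into $\Imm^{*}_{\mu}$. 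Index zero together with trivial kernel gives bijectivity, the open mapping theorem provides a bounded inverse, and smoothness of the map $f\mapsto (\Psi_f^L)^{-1}$ follows from smoothness of $f\mapsto \Psi_f^L$ and the fact that operator inversion is smooth on the open set of invertible bounded operators.

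The principal obstacle is the first step. There is no off-the-shelf pseudodifferential calculus for operators of the form $(1+\Delta_g)^{-l}$ acting on sections of bundles over a compact manifold when $g=f^*\bar g$ has only Sobolev regularity: classical Kohn--Nirenberg / H\"ormander calculi assume smooth symbols, and the existing rough-coefficient theories (Marschall, Taylor) are primarily scalar on $\R^n$. One must either promote such a calculus to the bundle-valued, globally defined setting or realize $L_f^{-1}$ via the resolvent / heat semigroup while tracking Sobolev dependence on $g$. The delicate point is that the commutator and composition estimates feeding into $k$-safeness must be tight enough that the threshold $k>d/2+1$ in the conjecture is not inflated; any slack here would require stronger regularity of $f$ than the rest of the theory tolerates, which is exactly why the authors defer this step to a future paper.
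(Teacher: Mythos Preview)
The statement you are addressing is labeled \textbf{Conjecture} in the paper and is explicitly \emph{not} proved there: the authors write just before it that ``one major building block towards this result is missing, namely elliptic theory for pseudo differential operators with Sobolev coefficients\ldots The proofs for pseudo differential operators will be done in a future paper.'' So there is no proof in the paper to compare your proposal against.

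Your proposal is therefore not a proof either, but a roadmap --- and you acknowledge this yourself in the last paragraph. The strategy you sketch (factor $\Psi_f^L = A_f\circ L_f^{-1}\circ A_f^*$, handle the outer differential pieces by the $k$-safeness machinery of \cite{Mueller2015}, and build a rough-coefficient parametrix for $L_f^{-1}$) is indeed the natural approach and matches what the paper hints at. The obstacle you identify --- a bundle-valued, globally defined pseudodifferential calculus over a compact manifold with only $H^{k-1}$ metric coefficients, sharp enough not to inflate the threshold $k>d/2+1$ --- is precisely the missing ingredient the authors flag. So your proposal is an accurate diagnosis of what would be needed, but it does not close the gap, and the paper does not claim to close it either.

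One minor remark on your invertibility step: you do not need the strong maximum principle to show $\ker A_f^*=0$. Since $A_f^*(u)=Tf.\grad^g u \pm u\,\Tr^g(S)$ has tangential part $Tf.\grad^g u$ and normal part $\pm u\,\Tr^g(S)$, vanishing of $A_f^*(u)$ forces $\grad^g u=0$ (so $u$ is constant on the connected manifold $M$) and $u\,\Tr^g(S)=0$; the hypothesis $\Tr^g(S)\not\equiv 0$ then gives $u=0$ directly. The maximum principle was needed in the earlier lemma only because there the relevant operator was the genuine second-order operator $\Delta^g-\|\Tr^g(S)\|^2$ acting on functions, not the first-order map $A_f^*$ whose image splits orthogonally.
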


With this assumption we obtain the following result concerning the smoothness of the projection on the Sobolev completion.

\begin{thm}[{\bf Well-posedness for intermediate metrics}]
 Let $L$ be an elliptic differential operator of order $2l\geq 2$ and $k>\frac{d}{2}+1$.  Assuming that Conj. \ref{assumption} holds, the orthogonal projection $P^L$ extends to a smooth mapping on the Hilbert completions:
 \begin{align*}
  P^L: \Imm^k(M,N)\times T\Imm^k(M,N) &\to T\Imm^k_{\mu}(M,N)\\
  (f,X)&\rightarrow P^L_f(X)\;.
 \end{align*}
\end{thm}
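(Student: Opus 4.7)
The plan is to decompose the formula
\[
P^L_f(X) = X - L^{-1}\bigl(Tf.\grad^g(p) + p.\Tr^g(S)\bigr), \qquad \Psi^L_f(p) = \div^g(X^\top) - \bar g(X^\bot, \Tr^g(S))
\]
into a composition of smooth maps between Hilbert bundles and to invoke Conjecture \ref{assumption} at the one nontrivial step, namely the inversion of $\Psi^L_f$. The preceding theorem already verifies in the smooth category that this formula yields a well-defined linear projection onto $T_f\Imm_\mu(M,N)$, so it suffices to check that every individual ingredient extends smoothly to the Sobolev completion and that the relevant exponents balance.

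First I would verify that the datum
\[
R(f,X) := \div^g(X^\top) - \bar g(X^\bot, \Tr^g(S))
\]
depends smoothly on $(f,X) \in \Imm^k(M,N) \times T\Imm^k(M,N)$ with values in the function space appearing as the target in Conjecture \ref{assumption}. This is a standard application of Sobolev multiplier estimates in the range $k > d/2+1$: the pull-back metric $g = f^*\bar g$, its inverse, the Christoffel symbols of the Levi-Civita connection of $g$, and the second fundamental form $S$ all depend smoothly on $f$ between the appropriate Sobolev completions, exactly as used for $\Delta_g$ in the proof of Theorem \ref{thm:wellposedness} and spelled out in \cite{Mueller2015}. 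The tangential/normal splitting of $X$ is likewise smooth in $(f,X)$, since it is a fibrewise algebraic operation in $Tf$ and $g^{-1}$. Conjecture \ref{assumption} then provides $(\Psi^L_f)^{-1}$ as a bounded operator depending smoothly on $f$, so composition yields the smooth solution map $(f,X) \mapsto p$.

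Having $p$, the section $Y := Tf.\grad^g(p) + p.\Tr^g(S)$ of $f^*TN$ depends smoothly on $(f,X)$ by the same multiplier estimates, since differentiating $p$ loses one derivative and $\Tr^g(S)$ is controlled by the second derivatives of $f$. The operator $L_f = (1+\Delta_g)^l$ is, by the $k$-safeness results invoked in the proof of Theorem \ref{thm:wellposedness}, a smooth family of isomorphisms between the relevant Sobolev completions, hence $L^{-1}(Y)$ is smooth in $(f,X)$ and lies in $T\Imm^k(M,N)$. Subtracting from $X$ gives the desired element of $T\Imm^k(M,N)$, and membership in the subbundle $T\Imm^k_\mu(M,N)$ follows from the defining equation for $p$ exactly as in the smooth case, since that verification is purely algebraic and survives the Sobolev completion verbatim.

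The genuine obstacle is Conjecture \ref{assumption} itself: it requires elliptic theory for \emph{pseudo}-differential operators with Sobolev-regular coefficients, acting as isomorphisms between Sobolev sections in a range of exponents compatible with $k > d/2+1$, together with smooth dependence on the coefficient $f \in \Imm^k(M,N)$. Beyond that input, the remaining work is a careful bookkeeping of Sobolev orders; in particular one must exploit that the right-hand side of \eqref{generalmetric_p} has one more derivative than the middle expression suggests, being of the form $\Tr^g\bigl(\bar g(\nabla X, Tf)\bigr)$ which is in $H^{k-1}$ rather than $H^{k-2}$, so that the target Sobolev order in Conjecture \ref{assumption} is large enough for the subsequent application of $L^{-1}$ to land in $H^k$.
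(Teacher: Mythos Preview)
Your proposal is correct and matches the paper's approach; in fact the paper gives no separate proof of this theorem, treating it as immediate from the projection formula together with Conjecture~\ref{assumption} and the Sobolev bookkeeping already noted in the preceding proof (in particular the observation that the right-hand side of \eqref{generalmetric_p} lies in $H^{k-1}$ rather than $H^{k-2}$). Your write-up simply spells out the steps the paper leaves implicit.
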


 In \cite{Bauer2011b}, local well-posedness for geodesics in $\on{Imm}(M,N)$ has been shown. The rough procedure is that one first pulls back $\tau_N: TN \rightarrow N$ to $\on{Imm} (M,N) \times M$ by the evaluation map $\on{ev}: \on{Imm} (M,N) \times M \rightarrow N$ given by $\on{ev}(f,m) := f(m)$. On the so obtained bundle $\on{ev}^* \tau_N$ (the bundle whose sections are vector fields along immersions from $M$ to $N$) one considers the pull-back connection $\nabla $ of the Levi-Civita connection on $\tau_N$. The crucial point is that this auxiliary connection is already torsion-free (as it is the pull-back connection of a torsion-free connection). It is not difficult to see that $\nabla$ is the Levi-Civita connection of $\on{Imm} (M,N)$ equipped with the $L^2$ metric. The next step consists then in calculating $\nabla G $ for a higher Sobolev metric $G$, and to express the trilinear form $\nabla G$ by the so-called metric gradients $K$ and $H$ as $\nabla_m G (h,k) = G(K(h,m), k ) = G (m, H(h,k)) $. One has to show that  $H$ and $K$ are continuous bilinear forms. 
 For the Sobolev metric $G^L$ this has been done in \cite{Bauer2011b}). Finally, the geodesic equation is calculated by standard methods as $\nabla_{\partial_t} f_t = \frac{1}{2} H_f (f_t, f_t) - K_f(f_t, f_t)$, and as the connection is torsion-free it follows easily that the Levi-Civita connection $\nabla^{LC}$ of $G_{A,n} $ can be calculated by polarization as $\nabla_X Y (f)= \frac{1}{2} H_f (X, Y) - K_f(X, Y)$, where $H_f$ and $K_f$ are the expressions depending on $n$ and $A$ given by the lemmas in 6.3 and 8.2 of \cite{Bauer2011b}. In our case the geodesic equation for a curve $c$ in $\on{Imm}_\mu (M,N)$ is just 
 $$P_f^L (\nabla^{LC}_t \dot{c} (t)) =0 . $$ 
The previous facts give rise to the following result:
\begin{thm}
 Let $L$ be an elliptic differential operator of order $2l\geq 2$. Under the Conjecture \ref{assumption}, the geodesic spray of the metric $G_L$ on $\on{Imm}^{k+2l}_\mu (M,N)$ is smooth for each $k>\frac{d}{2}+1$, and thus the geodesic equation is locally well-posed on $\on{Imm}^{k+2l}_\mu (M,N)$. The time interval of existence is independent of $k$ and thus this result continues to hold in the smooth category $\on{Imm}_\mu (M,N)$.
 \end{thm}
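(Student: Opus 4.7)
The plan is to reformulate the submanifold geodesic equation as a second-order ODE on $\Imm^{k+2l}_\mu(M,N)$ with smooth right-hand side, apply Picard-Lindel\"of on Hilbert manifolds, and then use a no-loss-no-gain bootstrap to pass to smooth initial data with a time of existence independent of $k$. Recall that the submanifold geodesic equation reads $P^L_c(\nabla^{LC}_t \dot c)=0$, where $\nabla^{LC}$ is the Levi-Civita connection of $G^L$ on the ambient space $\Imm(M,N)$. In a canonical chart the ambient covariant derivative takes the form $\nabla^{LC}_t\dot c = \ddot c - \Gamma_c(\dot c,\dot c)$, with $\Gamma_c$ built from the metric gradients $H_c,K_c$ of \cite{Bauer2011b}; requiring $c$ to stay on $\Imm_\mu$ determines the normal component of $\ddot c$ and gives an equation $\ddot c = S^L_\mu(c,\dot c)$, where the submanifold spray $S^L_\mu$ is assembled from the ambient spray and from the projection $P^L$ of the previous theorem.

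First I would verify that $S^L_\mu$ is a smooth vector field on $T\Imm^{k+2l}_\mu(M,N)$ for every $k>d/2+1$. By Theorem \ref{thm:wellposedness} the ambient spray $S^L$ of $G^L$ is smooth on $T\Imm^{k+2l}(M,N)$. Under Conjecture \ref{assumption} the previous theorem supplies a smooth projection
\begin{equation*}
P^L:\Imm^{k+2l}(M,N)\times T\Imm^{k+2l}(M,N) \longrightarrow T\Imm^{k+2l}_\mu(M,N),
\end{equation*}
so that $S^L_\mu$, being a composition of $S^L$, $P^L$ and smooth bundle operations, is smooth at the required Sobolev level. The crucial Sobolev bookkeeping is that the two-derivative loss coming from $L^{-1}(Tf.\grad^g(p)+p\,\Tr^g(S))$ in \eqref{higher-projection} is exactly compensated by the two-derivative gain coming from $(\Psi^L_f)^{-1}$ applied to the right-hand side of \eqref{generalmetric_p}; this is precisely the content of Conjecture \ref{assumption} and is what forces the base manifold to sit at Sobolev order $k+2l$ rather than $k$.

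Once $S^L_\mu$ is known to be smooth, the Picard-Lindel\"of theorem on Hilbert manifolds yields short-time existence, uniqueness and smooth dependence on initial data for geodesics at regularity $k+2l$. To upgrade to smooth initial data with a $k$-independent existence interval, I would follow the Ebin-Marsden no-loss-no-gain argument already invoked in Theorem \ref{thm:wellposedness}: for smooth $(f_0,\dot f_0)$ one views these as $H^{k+2l}$-data for every large $k$, uses uniqueness to conclude that the resulting $H^{k+2l}$-solutions agree on their common interval, and exploits the $\Diff(M)$-equivariance of the spray to prevent the existence time from shrinking as $k$ grows; taking $k\to\infty$ produces the smooth solution on the original interval. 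I expect the main obstacle to be precisely the joint smoothness of $(f,X)\mapsto P^L_f X$ on Hilbert completions, which rests on pseudodifferential elliptic theory with Sobolev coefficients — exactly the content of Conjecture \ref{assumption}. Once that is granted, the remaining steps are structurally parallel to the proof of Theorem \ref{thm:wellposedness} and to Ebin-Marsden's treatment of the group of volume-preserving diffeomorphisms.
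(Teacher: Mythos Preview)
Your proposal is correct and follows essentially the same route as the paper. The paper's argument, given in the paragraph preceding the theorem rather than in a formal proof, likewise writes the submanifold geodesic equation as $P_f^L(\nabla^{LC}_t \dot c)=0$, invokes the smoothness of the ambient spray via the metric gradients $H,K$ from \cite{Bauer2011b}, combines this with the smoothness of $P^L$ from the preceding theorem (which is where Conjecture~\ref{assumption} enters), and appeals to the same Ebin--Marsden no-loss-no-gain mechanism already used in Theorem~\ref{thm:wellposedness} for the $k$-independence of the existence time; your write-up simply makes these steps more explicit.
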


 \subsection{The theory for strong metrics}

\begin{thm}[{\bf well-posedness for strong metrics}]
Let $l=k>\frac{d}{2}+1$ and consider the metric $G^L$ induced by $L=(1+\Delta)^l$ on $\on{Imm}^k (M,N)$. Then the induced metric on $ \on{Imm}_\mu^k (M,N) $ is a strong metric again, and its geodesic equation for $G$ is locally well-posed. The time interval of existence is independent of $k$ and thus this result continues to hold in the smooth category $\on{Imm}_\mu (M,N)$.
 \end{thm}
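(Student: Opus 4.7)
The plan is to deduce this theorem by combining Thm.~\ref{thm:wellposedness}(\ref{WeakAndStrong}) with Thm.~\ref{Imm_vol} and invoking the general theory of strong Riemannian metrics on Hilbert manifolds, thereby bypassing the need for Conj.~\ref{assumption} at this strong Sobolev level. The first step is set-up: by Thm.~\ref{thm:wellposedness}(\ref{WeakAndStrong}), $G^L$ extends to a smooth strong Riemannian metric on $\Imm^k(M,N)$. By Thm.~\ref{Imm_vol}, $\Imm_\mu^{*,k}(M,N)$ is a smooth splitting Hilbert submanifold of $\Imm^k(M,N)$, so at each $f$ the tangent space $T_f\Imm_\mu^k(M,N)$ (described by \eqref{tangenspace_Immvol}) is a closed linear subspace of the Hilbert space $T_f\Imm^k(M,N)$. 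Restricting a strong inner product on a Hilbert space to a closed subspace yields again a strong inner product (Riesz representation applied to the closed subspace), and smoothness of this restriction in $f$ follows from smoothness of both the ambient metric and the Hilbert-space orthogonal projection onto the smoothly varying closed subspace.

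The second step is to deduce well-posedness of the geodesic equation from the abstract theory of strong metrics on Hilbert manifolds (as in Lang, Chapter VIII, or equivalently the submanifold Levi-Civita calculus recalled in Sect.~\ref{sec:L2:geodesic}). On any smooth Hilbert manifold carrying a smooth strong Riemannian metric there is a unique smooth Levi-Civita connection and a smooth geodesic spray, whence local existence and uniqueness follows from Picard-Lindel\"of on Banach manifolds, together with smooth dependence on initial data and the existence of a smooth Riemannian exponential map. Concretely, one writes the geodesic equation on the submanifold as $P_f^L(\nabla^{LC}_t \dot c) = 0$, with $P_f^L$ the $G^L$-orthogonal projection onto $T_f\Imm_\mu^k(M,N)$; at the strong Sobolev level $l=k$ this projection exists by abstract Hilbert-space theory and depends smoothly on $f$ because the tangent subspace varies smoothly and the ambient inner product is strong, so the problematic loss of derivatives that forced Conj.~\ref{assumption} in the intermediate case does not occur here.

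The third step is the passage to the smooth category and independence of the existence time from $k$. This is the standard Ebin-Marsden no-loss-no-gain argument: the geodesic spray constructed at level $\Imm^k_\mu$ restricts consistently to a smooth spray on $\Imm^{k'}_\mu$ for every $k'\ge k$, since $L=(1+\Delta)^l$ together with its inverse respects the Sobolev scale and the Gauss-Weingarten correction involves the second fundamental form of $\Imm_\mu \subset \Imm$ whose order has already been accounted for in choosing $l=k$. Consequently, an integral curve of the spray with $H^{k'}$-initial data stays in $H^{k'}$ on the same time interval on which it exists in $H^k$; intersecting over all $k'$ yields a smooth solution on the same time interval, giving local well-posedness on $\Imm_\mu(M,N)$.

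The main obstacle is precisely this last step: one must verify rigorously that the spray is tame enough in the Sobolev order to guarantee that $H^{k'}$-regularity is preserved for all $k'\ge k$ without shrinking the interval of existence. Here one needs that the coefficients arising from $\Tr^g(S)$ and from $L^{-1}$ acting in the projection do not trigger a loss-of-derivatives cascade when one differentiates the spray in $f$ along the Sobolev scale. This is an instance of the $k$-safeness machinery developed in \cite{Mueller2015}, to which one appeals exactly as in the proof of Thm.~\ref{thm:wellposedness}(\ref{SuperStrongExistence}); the fact that we are working with the strong metric ($l=k$) is what allows the argument to go through unconditionally, whereas the intermediate-metric variant required the pseudodifferential analogue stated as Conj.~\ref{assumption}.
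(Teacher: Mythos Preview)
Your argument for the first two steps is essentially identical to the paper's: use Thm.~\ref{thm:wellposedness}(\ref{WeakAndStrong}) to get strongness of $G^L$ on the ambient $\Imm^k(M,N)$, use Thm.~\ref{Imm_vol} to see that $\Imm_\mu^{*,k}(M,N)$ is a Hilbert submanifold modelled on a closed subspace, note that the restriction of a strong metric to such a submanifold is strong again, and then invoke Lang, Chapter~VIII, for the Levi-Civita connection, smooth spray, and local well-posedness. The paper's proof gives exactly this outline and in fact stops there; it does not spell out the independence-of-$k$ and smooth-category passage at all. Your third step therefore goes \emph{beyond} the paper's proof in attempting to justify that last sentence of the statement via an Ebin--Marsden no-loss-no-gain argument, and you correctly flag this as the genuine obstacle. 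Note one subtlety in how you phrase it: since the theorem ties $l=k$, the metric itself changes with $k$, so the relevant no-loss-no-gain statement is for a \emph{fixed} $l$ while raising the ambient Sobolev order $k'\ge k$; your wording already reflects this, but it is worth making explicit that one is not comparing different metrics.
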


 \begin{proof}
 We have seen above that $G$ is a strong metric on $\on{Imm}^k (M,N)$. The statement follows from the following well-known fact: If $X$ a Hilbert manifold and $Y$ a Hilbert submanifold of $X$ modelled on closed linear subspace, then the induced metric on $Y$ is strong again. This is because if $A$ is the operator appearing in the definition of the strong metric on $X$ and if we choose the orthogonal projection $P$ on $TY$ then $\tilde{A}:= P \o A $ is the searched-for intermediating operator appearing in the definition of 'strong metric' on $Y$. Knowing that the metric $G$ restricted to $ \on{Imm}_\mu^k (M,N) $ is strong, we can invoke \cite{Lang1999}, VIII.4.2 and VIII.5.1, to show that the geodesic equation is locally well-posed.
\end{proof}

\section{Examples}\label{sec:ex}

\subsection{The group of volume preserving diffeomorphisms}\label{sec:ex:SDiff}
In the following we want to consider the special case  that $M$ equals $N$. Then the space of all volume preserving embeddings equals the group of volume preserving diffeomorphisms; i.e., ${\rm Emb}_\mu (M,N) = {\rm Diff}_\mu(M,N)$. 
The geodesic equation for the $L^2$-metric then simplifies to Euler's equations for the motion of an incompressible fluid, see \cite{Arnold1966}. 
Local well-posedness for this equation has been shown by Ebin and Marsden in \cite{EM1970}. For strong metrics; i.e., for $l=k$, even global well-posedness is true:
\begin{cor}[cf. \cite{mBfV}, Remark after Cor. 7.6]\label{thm:SDiff}
Let $k>\frac{d}{2}+1$. The space $\Diff_\mu^k(M)$ equipped with the right-invariant Sobolev metric of order $k$ is a geodesically and metrically complete space. 
\end{cor}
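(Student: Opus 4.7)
The plan is to derive Corollary~\ref{thm:SDiff} by specializing the completeness theorem of \cite{mBfV} for strong right-invariant Sobolev metrics on $\on{Diff}^k(M)$ to the closed subgroup $\on{Diff}_\mu^k(M)$, using the submanifold structure established in Theorem~\ref{Imm_vol} together with the preceding strong-metric well-posedness result. First, the case $M=N$ of the previous theorem shows that the restriction of the strong Sobolev metric $G^L$ with $L=(1+\Delta)^k$ from $\on{Diff}^k(M)$ to the closed Hilbert submanifold $\on{Diff}_\mu^k(M)$ is again a strong Riemannian metric and that its geodesic equation is locally well-posed. Since $\mu$ is preserved by the right action of $\on{Diff}_\mu(M)$ on itself, the induced metric is right-invariant with respect to this subgroup, and one is in exactly the setting of \cite{mBfV}.

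For geodesic completeness I would run the standard energy-conservation argument in Eulerian coordinates. Given a geodesic $\varphi(t)$ define $u(t):=\partial_t\varphi(t)\circ\varphi(t)^{-1}$, a divergence-free vector field on $M$. Right-invariance of $G^L$ yields $G^L_{\on{Id}}(u(t),u(t))=G^L_{\on{Id}}(u(0),u(0))$ for all $t$, and for $L=(1+\Delta)^k$ this quantity is equivalent to $\|u(t)\|_{H^k}^2$. Since $k>d/2+1$, Sobolev embedding guarantees $u(t)\in C^1$ with a uniform bound, so the flow ODE $\partial_t\varphi=u\circ\varphi$ extends to arbitrary time inside $\on{Diff}_\mu^k(M)$, giving geodesic completeness. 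Invariance of divergence-freeness under the induced Euler-type PDE (which is encoded in the projection $P^L$) ensures that the flow remains in the submanifold.

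For metric completeness the argument of the remark after Corollary~7.6 of \cite{mBfV} adapts: given a $G^L$-Cauchy sequence $\varphi_n$ in $\on{Diff}_\mu^k(M)$, one chooses connecting paths whose lengths sum to a finite total. Right-invariance converts the uniform length bound into a uniform $H^k$ bound on the corresponding Eulerian velocities along the connecting paths, and compactness of the embedding $H^k \hookrightarrow H^{k-1}$ combined with the closedness of $\on{Diff}_\mu^k(M)$ inside $\on{Diff}^k(M)$ produces a limit $\varphi_\infty \in \on{Diff}_\mu^k(M)$ with $\varphi_n \to \varphi_\infty$ in $G^L$-distance.

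The main obstacle is precisely this last step: Atkin's classical counterexample shows that strong Riemannian metrics on Hilbert manifolds need not be metrically complete even when geodesic completeness holds, so one cannot invoke an abstract Hopf--Rinow theorem. One must instead exploit the specific right-invariant Sobolev structure as in \cite{mBfV}, where a no-loss-no-gain estimate along almost-minimising paths yields the required $H^k$-compactness. The volume-preserving constraint enters only through the closedness of $\on{Diff}_\mu^k(M) \subset \on{Diff}^k(M)$ and the fact that divergence-freeness of $u$ is preserved under the geodesic flow, so the proof of \cite{mBfV} transfers essentially verbatim.
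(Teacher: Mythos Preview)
Your argument is essentially correct, but it takes a considerably longer route than the paper's. The paper's proof is three lines: (i) by \cite{mBfV}, the ambient space $(\Diff^k(M),G^k)$ is geodesically and metrically complete; (ii) since $k>d/2+1$ the map $f\mapsto f^*\vol$ is continuous, so $\Diff_\mu^k(M)$ is a \emph{closed} subset of this metrically complete space and is therefore itself metrically complete; (iii) metric completeness together with strongness of the metric yields geodesic completeness via \cite[VIII.6.5]{Lang1999}. In particular, the paper deduces geodesic completeness \emph{from} metric completeness, not the other way around --- this is the direction of Hopf--Rinow that \emph{does} survive in infinite dimensions, so Atkin's counterexample is simply irrelevant here.

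Your approach instead reproves both properties from scratch: geodesic completeness via conservation of the $H^k$-energy of the Eulerian velocity plus a continuation argument, and metric completeness by rerunning the no-loss/no-gain estimates of \cite{mBfV} inside the subgroup. This is fine and is exactly how \cite{mBfV} proceeds on the full diffeomorphism group, but it is redundant once that reference is available. Two small points on your sketch: in the geodesic-completeness step, bounding $\|u(t)\|_{H^k}$ is not by itself enough to extend $\varphi$ --- you are implicitly invoking a blow-up criterion for the Euler--Arnold equation, which is precisely the nontrivial content of \cite{mBfV}; and in the metric-completeness step you should note that a $d_{\Diff_\mu}$-Cauchy sequence is automatically $d_{\Diff}$-Cauchy (since intrinsic distance dominates ambient distance), after which closedness and local comparability of the two distances near the limit finish the job. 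The paper's shortcut absorbs all of this into the single observation that a closed subset of a complete space is complete.
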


\begin{proof}
It has been established recently \cite{mBfV} that $ (\Diff^k(M,N), G^k) $ is geodesically and metric complete, see also \cite{MM2013,BEK2015,EK2014}.  
As $k> d/2 +1$, the map $f \mapsto f^*\vol$ is continuous, thus $\Diff_\mu(M)$ is a closed subset of the metrically complete space $\Diff(M)$, see also  \cite{EM1970}. Hence it is metrically complete and thus geodesically complete by \cite{Lang1999}, Prop. VIII.6.5. 
\end{proof}

\subsection{The space of constant speed curves}\label{sec:ex:curves}
In this part we want to consider the special case $M=S^1$, $N=\mathbb R^2$ and $\mu=\frac{2\pi}{\ell_c}d\theta$ and we will show that we regain the formulas of \cite{Preston2011,Preston2012,PS2013}.
We start with the $L^2$-metric and we want to consider the geodesic equation from Sec.~\ref{sec:L2} in this much simpler situation:
\begin{cor}
On the space $\Imm_{\mu}(S^1,\mathbb R^2)$ the geodesic equation of the $L^2$-metric reads as
$$\left\{\begin{aligned}
c_{tt} &= p'.c' + p.c''= (pc')',\\
  p''-\|c''\|^2 p &=  -\|c'_t\|^2  \,.
\end{aligned}\right.$$ 
\end{cor}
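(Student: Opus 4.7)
The plan is to specialize the general geodesic equation from the previous theorem to the one-dimensional, flat-target setting $M=S^1$, $N=\mathbb R^2$, and extract the consequences of the constraint $c\in \Imm_\mu(S^1,\mathbb R^2)$.

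First I would set up the kinematics. The condition $\on{vol}(c^*\bar g)=\mu=\tfrac{2\pi}{\ell_c}d\theta$ says that $\|c'\|$ is a constant (in $\theta$), which after the obvious rescaling of the corollary we may take to be $1$. Because $\mu$ is preserved along the geodesic, $\|c'\|$ is also constant in $t$. Differentiating the identity $\|c'\|^2=1$ in $\theta$ and in $t$ respectively yields the two orthogonality relations $\bar g(c',c'')=0$ and $\bar g(c',c_t')=0$, which will be used silently below. The pullback metric is therefore $g=d\theta^2$, its Christoffel symbols vanish, and $\nabla^g$ acts on vector fields along $c$ simply as $\partial_\theta$.

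Second I would evaluate the geometric ingredients entering the first line of the geodesic equation. From $g=d\theta^2$ one gets $\grad^g(p)=p'\partial_\theta$, so $Tc.\grad^g(p)=p'c'$; and $S(\partial_\theta,\partial_\theta)=(\nabla Tc)(\partial_\theta,\partial_\theta)=c''$, which is already normal to $c$ by constant speed, whence $\Tr^g(S)=c''$ and $\|\Tr^g(S)\|^2=\|c''\|^2$. Substituting these into $c_{tt}=Tc.\grad^g(p)+p\cdot\Tr^g(S)$ immediately yields
\begin{equation*}
c_{tt}=p'c'+pc''=(pc')'.
\end{equation*}

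Third I would reduce the three trace terms in the $p$-equation one by one. Since $g=d\theta^2$ is $t$-independent, $\partial_t g^{-1}=0$, so the first term vanishes. Since the Christoffel symbols stay identically zero, $\partial_t\nabla^g=0$, so the second term vanishes as well. For the third term, torsion-freeness of the pullback connection gives $\nabla_t(Tc)=\nabla_t c'=c_t'$, and since $N=\mathbb R^2$ is flat, $\nabla^g c_t=c_t'$; the trace $\on{Tr}(g^{-1}\langle\nabla^g c_t,\nabla_t Tc\rangle)$ reduces to $\bar g(c_t',c_t')=\|c_t'\|^2$, contributing $-\|c_t'\|^2$. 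Combining this with $\Delta p=p''$ on $(S^1,d\theta^2)$ (with the sign convention used in the general equation) and $\|\Tr^g(S)\|^2=\|c''\|^2$ gives $p''-\|c''\|^2 p=-\|c_t'\|^2$.

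There is no real obstacle beyond careful bookkeeping: the main subtleties are to note that constant speed automatically enforces $c'\perp c''$ and $c'\perp c_t'$ (so the normal components appearing in the general theorem are already manifest), to be consistent with the sign of the Bochner Laplacian on $(S^1,d\theta^2)$, and to verify that the two $t$-derivative terms (of $g^{-1}$ and of $\nabla^g$) really do vanish because the one-dimensional geometry of $c$ is completely frozen along the geodesic.
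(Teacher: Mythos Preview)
Your argument is correct and follows exactly the paper's approach: you specialize the general geodesic equation by observing that the pullback metric $g$ is constant on $\Imm_\mu(S^1,\mathbb R^2)$, which kills the first two trace terms, and then identify the remaining pieces. You simply flesh out in detail what the paper leaves implicit, namely the explicit evaluation of $Tc.\grad^g(p)=p'c'$, $\Tr^g(S)=c''$, and the third trace term $-\|c_t'\|^2$.
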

\begin{rem*}
 Note, that this equation is equal to the equation studied in \cite{Preston2011,Preston2012}. The main difference to the general situation is the constant sign of the right hand side $-\|c'_t\|^2$. In 
\cite{Preston2011,Preston2012} this was used to show local wellposedness of this equation.
\end{rem*}
\begin{proof}
This follows directly from the formula of the geodesic equation in Sec.~\ref{sec:L2}, using the fact that the metric $g=\frac{1}{|c'|^2}$ is constant on $\Imm_{\mu}(S^1,\mathbb R^2)$.
Thus we have
\begin{equation}
\on{Tr}\left(\partial_t\left(g(t)^{-1}\right)\langle \nabla^{g} f_t,Tf   \rangle\right)=\on{Tr}\left(g^{-1}\langle \partial_t\left( \nabla^{g(t)}\right) f_t,Tf   \rangle\right)=0\qedhere
\end{equation}
\end{proof}

The observation that the not only the volume form, but also the metric is constant on $\Imm_{\mu}(S^1,\mathbb R^2)$ continues to have a large influence also for the higher order metrics.
We now want to study the operator $\Psi$ that is used to define the orthogonal projection. To simplify the notation we assume $|c'|=1$. Then we have:
\begin{align*}
 \Psi^L_c(p)=\partial_{\theta} \left(\left((1-\partial^2_{\theta})^{-l}\left(p'.c'  + p.c''\right)\right)^\top \right)-\bar g\left ((1-\partial^2_{\theta})^{-l}\left(p'.c'  + p.c''\right),c''\right)\;.
\end{align*}
We can now further rewrite this to obtain:
\begin{align*}
 \Psi^L_c(p)&=\partial_{\theta} \left(\left((1-\partial^2_{\theta})^{-l}\left(\partial_{\theta}(p.c') \right)\right)^\top \right)-\bar g\left ((1-\partial^2_{\theta})^{-l}\left(\partial_{\theta}(p.c') \right),c''\right)
 \\&=\partial_{\theta}\bar g \left((1-\partial^2_{\theta})^{-l}\left(\partial_{\theta}(p.c') \right),c' \right)-\bar g\left ((1-\partial^2_{\theta})^{-l}\left(\partial_{\theta}(p.c') \right),c''\right)\\
 &=\bar g \left(\partial_{\theta}(1-\partial^2_{\theta})^{-l}\left(\partial_{\theta}(p.c') \right),c' \right)=\bar g \left((1-\partial^2_{\theta})^{-l}\partial^2_{\theta}(p.c') ,c' \right)
\end{align*}
Note that for $l=0$ this gives $\Psi^0_c(p)=p''-|c''|^2p$. In the $C^1$-topology the existence of solutions to this equations has been shown in \cite{PS2013}.

Recently it was shown in \cite{Bruveris2014}, that the geodesic equation on the space of curves is globally well-posed for $l\geq 2$. 
Using this result, one would expect to obtain the analogue of Cor.~\ref{thm:SDiff} for the space of constant speed curves.

\appendix
\section{Variational formulas.}
In this appendix we will collect some variational formulas that we used throughout the article. For proofs of these results using a similar notation we refer to \cite{Bauer2011b}.

\begin{lem}\nmb.{1.5}	 {\rm \cite[Lemma 5.5 and Lemma 5.6]{Bauer2011b}}
The differential of the pullback metric
\begin{equation*}\left\{ \begin{array}{ccl}
\Imm &\to &\Gamma(S^2_{>0} T^*M),\\
f &\mapsto &g=f^*\bar g
\end{array}\right.\end{equation*}
is given by
\begin{align}
D_{(f,h)} g&= 2\on{Sym}\bar g(\nabla h,Tf) = -2 \bar g(h^\bot,S)+2 \on{Sym} \nabla (h^\top)^\flat 
\\& = -2 \bar g(h^\bot,S)+ \L_{h^\top} g.
\end{align}
The differential of the inverse of the pullback metric
\begin{equation*}\left\{ \begin{array}{ccl}
\Imm &\to &\Ga\big(L(T^*M,TM)\big),\\
f &\mapsto &g\i=(f^*\bar g)\i
\end{array}\right.\end{equation*}
is given by
\begin{align}
D_{(f,h)} g\i = D_{(f,h)} (f^*\bar g)\i =2 \bar g(h^\bot, g\i S  g\i) + \mathcal L_{h^\top}(g\i)
\end{align}
\end{lem}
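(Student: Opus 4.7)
The plan is to fix a smooth variation $t\mapsto f_t$ in $\Imm(M,N)$ with $f_0=f$ and $\partial_t f_t|_{t=0}=h$, and differentiate $g_t=f_t^*\bar g$ at $t=0$. Viewing $Tf_t.X$ as a section of the bundle over $\R\times M$ obtained by pulling back $TN$ along $(t,x)\mapsto f_t(x)$, the key input is torsion-freeness of the induced connection, which gives $\nabla_{\partial_t}(Tf_t.X)|_{t=0}=\nabla_X h$ for any vector field $X$ on $M$ (trivially lifted in $t$). Applying $\partial_t$ to $g_t(X,Y)=\bar g(Tf_t.X,Tf_t.Y)$ at $t=0$ and using metric compatibility of $\nabla^{\bar g}$ then yields
\begin{equation*}
D_{(f,h)}g(X,Y)=\bar g(\nabla_X h,Tf.Y)+\bar g(Tf.X,\nabla_Y h)=2\on{Sym}\,\bar g(\nabla h,Tf)(X,Y),
\end{equation*}
which is the first equality.

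For the second equality I would split $h=Tf.h^\top+h^\bot$. From the defining identity $\nabla_X(Tf.h^\top)=Tf.\nabla_X h^\top+S(X,h^\top)$ and the normality of $S(X,h^\top)$, the tangential contribution reduces to $g(\nabla_X h^\top,Y)=\nabla_X((h^\top)\f)(Y)$, where $\f$ is taken with respect to $g$ (using metric compatibility of $\nabla^g$). The normal contribution is handled by differentiating the identity $\bar g(h^\bot,Tf.Y)=0$ along $X$, which produces $\bar g(\nabla_X h^\bot,Tf.Y)=-\bar g(h^\bot,S(X,Y))$. Symmetrizing and invoking the standard identity $2\on{Sym}\,\nabla((h^\top)\f)=\L_{h^\top}g$, valid for any torsion-free metric connection, yields the remaining two equalities.

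The inverse-metric formula then follows by differentiating $g\cdot g^{-1}=\on{Id}$ to get $Dg^{-1}=-g^{-1}\cdot Dg\cdot g^{-1}$ and substituting what has just been proved. The normal term produces $2\bar g(h^\bot,g^{-1}Sg^{-1})$ after raising both indices of $S$ via $g^{-1}$, while the tangential term gives $-g^{-1}\L_{h^\top}g\cdot g^{-1}=\L_{h^\top}g^{-1}$, the last identity being the Leibniz rule for $\L_{h^\top}$ applied to $g\cdot g^{-1}=\on{Id}$. The only nonroutine step in the whole argument is the identification $\nabla_{\partial_t}(Tf_t.X)|_{t=0}=\nabla_X h$, which reflects torsion-freeness of the connection pulled back along $(t,x)\mapsto f_t(x)$; everything else is direct bookkeeping and can be found in \cite{Bauer2011b}.
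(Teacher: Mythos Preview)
Your argument is correct and is precisely the standard derivation one finds in the cited reference \cite[Lemmas~5.5 and~5.6]{Bauer2011b}; the paper itself does not give a proof but merely records the statement and defers to that source. In particular, your use of torsion-freeness of the pulled-back connection to identify $\nabla_{\partial_t}(Tf_t.X)|_{t=0}=\nabla_X h$, the splitting $h=Tf.h^\top+h^\bot$, and the Leibniz step $Dg^{-1}=-g^{-1}\,Dg\,g^{-1}$ are exactly the ingredients used there.
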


\begin{lem}\nmb.{1.6}{\rm \cite[Lemma 5.7]{Bauer2011b}}
\label{partialtvolume}
The differential of the volume density
\begin{equation*}
\left\{ \begin{array}{ccl}
\Imm &\to &\Vol(M),\\
f &\mapsto &\vol(g)=\vol(f^*\bar g)
\end{array}\right.\end{equation*}
is given by
\begin{equation*}
D_{(f,h)} \vol(g) = 
\Tr^g\big(\bar g(\nabla h,Tf)\big) \vol(g)=
\Big(\on{div}^{g}(h^{\top})-\bar g\big(h^{\bot},\Tr^g(S)\big)\Big) \vol(g).
\end{equation*}
\end{lem}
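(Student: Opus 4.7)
The plan is to derive the formula from Lemma 1.5 (variation of the pullback metric) together with the standard identity relating the variation of a volume density to the trace of the metric variation. Concretely, for any smooth family of Riemannian metrics $g(t)$ on $M$ one has
\begin{equation*}
\partial_t \vol(g(t)) = \tfrac{1}{2}\Tr^{g(t)}\!\bigl(\partial_t g(t)\bigr)\,\vol(g(t)).
\end{equation*}
Applying this with $g(t)=f(t)^*\bar g$ where $\partial_t f = h$ and using the first assertion of Lemma 1.5, namely $D_{(f,h)}g = 2\on{Sym}\,\bar g(\nabla h,Tf)$, the trace kills the symmetrization and the factor $\tfrac{1}{2}$, so
\begin{equation*}
D_{(f,h)}\vol(g) = \Tr^g\bigl(\bar g(\nabla h,Tf)\bigr)\,\vol(g),
\end{equation*}
which is the first claimed equality.

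For the second equality, I would split $h = Tf.h^\top + h^\bot$ according to \eqref{decomposition} and compute the two contributions separately. For the tangential part, using the torsion-freeness of $\nabla$ on $f^*TN$ and the definition of the second fundamental form, one obtains $\nabla_X(Tf.h^\top) = Tf.\nabla_X h^\top + S(X,h^\top)$, so that $\bar g(\nabla_X(Tf.h^\top),Tf.Y) = g(\nabla_X h^\top,Y)$ because $S$ takes values in the normal bundle. Tracing over $X=Y$ with $g$ yields $\Tr^g(g(\nabla h^\top,\cdot)) = \on{div}^g(h^\top)$.

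For the normal part, one uses that $h^\bot$ is a section of $\Nor(f)$, so $\bar g(h^\bot,Tf.Y)\equiv 0$. Differentiating this identity in $X$ and using that $\nabla$ is metric for $\bar g$ gives
\begin{equation*}
\bar g(\nabla_X h^\bot, Tf.Y) = -\bar g(h^\bot,\nabla_X Tf.Y) = -\bar g(h^\bot,S(X,Y)),
\end{equation*}
where the second equality again uses that $h^\bot$ is normal and $Tf.\nabla_X Y$ is tangential. Tracing with $g^{-1}$ gives $-\bar g(h^\bot,\Tr^g(S))$. Adding the two contributions gives the second claimed equality.

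The routine bookkeeping is the index-tracing in the second step; the one small subtlety is keeping track of which parts live in the normal vs.\ tangential bundle so that the pairings with $Tf$ collapse correctly. Since the statement is quoted from \cite[Lemma 5.7]{Bauer2011b}, no new ideas are required beyond the two splitting identities above and the standard variational formula for $\vol(g)$.
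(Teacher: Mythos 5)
Your proposal is correct and follows essentially the route the paper indicates: the first equality is the standard variation formula $\partial_t\vol(g)=\tfrac12\Tr^g(\partial_t g)\vol(g)$ combined with Lemma 1.5 (where the trace indeed absorbs the symmetrization and the factor $\tfrac12$), and the second equality is obtained, exactly as the paper remarks, by decomposing $h=Tf.h^\top+h^\bot$ and using that $S$ is normal-valued together with metricity of $\nabla$ on the normal term. No gaps.
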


Here, the last equation is easy to see decomposing $h$ in its tangential and normal part.

\begin{lem}\nmb.{1.7}{\rm \cite[3.11 and Lemma 5.9]{Bauer2011b}}
The Bochner-Laplacian is defined by
$$\Delta B = \nabla^*\nabla B = - \on{Tr}^g(\nabla^2 B).$$
for any tensor fields $B$.
It is a smooth section $f\mapsto \De^{f^*\bar g}$ of the bundle 
$$
L(T\Imm(M,N);T\Imm(M,N))\to\Imm(M,N).
$$
Its derivative can be expressed by  
the covariant derivative explained in section:
For $\De \in \Ga\big(L(T\Imm;T\Imm)\big)$, $f \in \Imm$ and $f_t,h \in T_f\Imm$ one has
\begin{align}
(\nabla_{f_t} \Delta)(h) &=
\on{Tr}\big(g\i.(D_{(f,f_t)}g).g\i \nabla^2 h\big) 
-\nabla_{\big(\nabla^*(D_{(f,f_t)} g)+\frac12 d\on{Tr}^g(D_{(f,f_t)}g)\big)^\sharp}h \\&\qquad
+\nabla^*\big(R^{\bar g}(f_t,Tf)h\big)
-\Tr^g\Big( R^{\bar g}(f_t,Tf)\nabla h \Big).
\end{align}
\end{lem}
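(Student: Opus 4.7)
The plan is to write $\Delta h = -\Tr^g(\nabla^2 h)$ and differentiate the three ingredients separately: the inverse metric $g^{-1}$ taking the trace, the pull-back connection $\nabla = f^*\nabla^{\bar g}$ that produces the Hessian in the $f^*TN$ direction, and the Levi-Civita connection of $g$ on $M$ that produces the second covariant derivative on $T^*M \otimes T^*M \otimes f^*TN$. Each of these depends on $f$, and the four terms of the right-hand side will turn out to correspond, respectively, to the variation of $g^{-1}$, the variation of the $M$-Christoffel symbols, and two separate pieces arising from the $N$-curvature when one commutes the variational derivative $\nabla_{f_t}$ past the two $\nabla$'s defining the Hessian.

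First I would recall that the covariant derivative $\nabla_{f_t}$ on sections of the bundle $L(T\Imm;T\Imm) \to \Imm$ is by definition the one induced by the pull-back connection, and write
\begin{align*}
(\nabla_{f_t}\Delta)(h) \;=\; \nabla_{f_t}(\Delta h) - \Delta(\nabla_{f_t} h).
\end{align*}
Using $\Delta h = -\Tr^g(\nabla^2 h)$, the first term I would split by the Leibniz rule:
\begin{align*}
\nabla_{f_t}\Tr^g(\nabla^2 h) \;=\; \Tr\big(\nabla_{f_t}(g^{-1})\cdot \nabla^2 h\big) \;+\; \Tr^g\big(\nabla_{f_t}\nabla^2 h\big),
\end{align*}
and plug in the formula $\nabla_{f_t}(g^{-1}) = -g^{-1}(D_{(f,f_t)}g)g^{-1}$ from Lemma \nmb!{1.5}; this produces exactly the first term of the statement (with the minus sign being absorbed by the $-\Tr^g$ in $\Delta$).

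Next I would attack $\Tr^g(\nabla_{f_t}\nabla^2 h) - \Delta(\nabla_{f_t} h)$ by commuting $\nabla_{f_t}$ past each of the two $\nabla$'s. The key observation is that $\nabla = f^*\nabla^{\bar g}$ is a \emph{pull-back} connection on the bundle $\ev^*\tau_N$ over $\Imm \times M$, and that the auxiliary connection one uses to define $\nabla_{f_t}$ is by construction torsion-free, so the commutator $[\nabla_{f_t},\nabla_X]$ acting on a section $k$ of $f^*TN$ is simply the pulled-back curvature $R^{\bar g}(f_t,Tf.X)k$, with no Christoffel-symbol corrections from the $N$-side. Commuting $\nabla_{f_t}$ past the \emph{inner} $\nabla$ of $\nabla^2 h$ produces, after tracing with $g^{-1}$, the term $-\Tr^g(R^{\bar g}(f_t,Tf)\nabla h)$; commuting it past the \emph{outer} $\nabla$ of $\nabla^2 h$ produces a term $\nabla^*(R^{\bar g}(f_t,Tf)h)$, where the $\nabla^*$ arises after moving a $\Tr^g$-contraction through a covariant derivative and relabelling indices (this is where one has to be a little careful, but it is the standard Weitzenb\"ock-type bookkeeping).

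Finally, the outer $\nabla$ in $\nabla^2 h = \nabla(\nabla h)$, viewed as a section of $T^*M\otimes T^*M\otimes f^*TN$, uses the Levi-Civita connection of $g = f^*\bar g$ on $M$ in the $T^*M$ factor; varying this produces a one-form-valued contribution that contracts with $\nabla h$ to give a vector field on $M$ which, after tracing, acts on $h$ by covariant differentiation. The standard formula for the variation of Christoffel symbols under a variation $\dot g = D_{(f,f_t)}g$ of the metric reads
\begin{align*}
\dot\Gamma(X,Y) \;=\; \tfrac12 g^{-1}\bigl(\nabla_X \dot g(Y,\cdot) + \nabla_Y \dot g(X,\cdot) - \nabla_{(\cdot)}\dot g(X,Y)\bigr)^\sharp,
\end{align*}
and taking the $g$-trace on $(X,Y)$ collapses this to precisely $\bigl(\nabla^*(D_{(f,f_t)}g) + \tfrac12\,d\,\Tr^g(D_{(f,f_t)}g)\bigr)^\sharp$, which is the vector field appearing in the second term of the statement. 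The resulting contribution is $-\nabla_{(\cdot)} h$ applied to that vector field, giving exactly term two. Collecting the four pieces yields the claimed formula.

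The main obstacle I expect is sign/index bookkeeping when the commutator $[\nabla_{f_t},\nabla]$ is applied to $\nabla h$ rather than $h$: one has to distinguish carefully whether the curvature $R^{\bar g}$ acts on the value $h$ directly (producing the $\nabla^*$-term) or on $\nabla h$ (producing the $\Tr^g$-term), and one has to integrate the $\Tr^g$ against the inner $\nabla$ to move it outside, which is essentially the same step that turns $\nabla^*$ into minus a $\Tr^g$ of a covariant derivative. Once this is done cleanly in a local $g$-orthonormal frame (so that Christoffel symbols of $g$ vanish at the base point and one only has to keep the $\dot\Gamma$-terms) the four pieces separate without interference.
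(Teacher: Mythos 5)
The paper does not actually prove this lemma: the appendix states it with the pointer to \cite[3.11 and Lemma 5.9]{Bauer2011b}, and the proof given there is exactly the computation you outline --- differentiate $\Delta h=-\Tr^g(\nabla^2h)$ and separate the variation of $g\i$ in the trace, the variation of the Christoffel symbols of $g$ in the outer covariant derivative, and the two curvature commutators $[\nabla_{f_t},\nabla_X]k=R^{\bar g}(f_t,Tf.X)k$ arising from the inner and outer $\nabla$ of the Hessian. Your sketch is correct; the only point to watch is the sign in the Christoffel contribution, since the $g$-trace of $\dot\Gamma$ equals $-\big(\nabla^*(D_{(f,f_t)}g)+\tfrac12\, d\Tr^g(D_{(f,f_t)}g)\big)^\sharp$ rather than that vector field itself, and it is the combination of this minus with the $-\Tr^g$ in $\Delta$ applied to $-\nabla_{\dot\Gamma(\cdot,\cdot)}h$ that yields the minus sign in the statement --- your phrasing shuffles these two signs but arrives at the correct result.
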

The Bochner-Laplacian coincides with the de~Rham-Laplacian on the space of functions.

\section{More on covariant derivatives}\label{variational}
Let $\nabla$ denote any kind of induced covariant derivative which comes from the Levi-Civita 
derivative of $\bar g$. Since we will use an induced covariant derivative for several kinds of 
tensor bundles on $\Imm(M,N)$, let us explain the setup of \cite[3.7 and 4.2]{Bauer2011b} a bit, 
which uses the detailed setup 
of \cite[sections 19.12, 22.9]{MichorG}. If we want to be specific we will write
$\nabla^g, \nabla^{\bar g}$ for the \emph{Levi-Civita covariant derivatives} on $(M,g)$
and $(N,\bar g)$, respectively. 
For any manifold $Q$ and vector field $X$ on $Q$, one has
\begin{align}
\nabla^g_X:C^\infty(Q,TM) &\to C^\infty(Q,TM), & h &\mapsto \nabla^g_X h \\
\nabla^{\bar g}_X: C^\infty(Q,TN) &\to C^\infty(Q,TN), & h &\mapsto \nabla^{\bar g}_X h.
\end{align}
From the properties listed in \cite[section 3.7]{Bauer2011b} we just repeat the following:
\begin{enumerate}
\item[(1)]
$\pi \o \nabla_X h = \pi \o h$, where $\pi$ is the projection 
of the tangent space onto the base manifold. 
\item[(5)] 
For any manifold $\widetilde Q$ and smooth mapping 
$q:\widetilde Q \to Q$ and $Y_y \in T_y \widetilde Q$ one has
$\nabla_{Tq.Y_y}h=\nabla_{Y_y}(h \o q)$. If $Y \in \X(Q_1)$ and $X \in \X(Q)$ are $q$-related, then 
$\nabla_Y(h \o q) = (\nabla_X h) \o q$.
\end{enumerate}
The two covariant derivatives $\nabla^g_X$ and $\nabla^{\bar g}_X$ 
combine to yield a covariant derivative $\nabla_X$ acting on
$C^\infty(Q,T^r_sM \otimes TN)$ in the usual way.

The covariant derivative $\nabla^{\bar g}$ induces a
\emph{covariant derivative over immersions} as follows.
Let $Q$ be a smooth manifold. Then one identifies
\begin{align}
&h \in  C^\infty\big(Q,T\Imm(M,N)\big) && \text{and} && X \in \X(Q)
\intertext{with}
&h^{\wedge} \in C^\infty(Q \x M, TN) && \text{and} && (X,0_M) \in \X(Q \x M).
\end{align}
As above one has the covariant derivative
$$\nabla^{\bar g}_{(X,0_M)} h^{\wedge} \in C^\infty\big(Q \x M, TN).$$
Thus one can define
$$\nabla_X h = \left(\nabla^{\bar g}_{(X,0_M)} h^{\wedge}\right)^{\vee} \in C^\infty\big(Q,T\Imm(M,N)\big).$$
This covariant derivative is torsion-free; see \cite[section~22.10]{MichorG}.
It respects $\bar g$ and $\bar G$ but in general does not respect any of the invariant metrics $G$ 
used above.  
The special case $Q=\R$ will be important to formulate the geodesic equation. 
The expression that will be of interest in the formulation of the 
geodesic equation is $\nabla_{\p_t} f_t$, which is 
well-defined when $f:\R \to \Imm(M,N)$ is a path of immersions and $f_t: \R \to T\Imm(M,N)$ is its velocity. 
Another case of interest is $Q = \Imm(M,N)$. Let $h, k, m \in \X(\Imm(M,N))$. Then the covariant 
derivative $\nabla_m h$ is well-defined and tensorial in $m$.
Requiring $\nabla_m$ to respect the grading of the spaces of multilinear maps, to act as a derivation 
on products and to commute with compositions of multilinear maps, one obtains 
as above a covariant 
derivative $\nabla_m$ acting on all mappings into 
the natural bundles of multilinear mappings over $\Imm(M,N)$.
We shall use it as background (static) covariant derivative.
In particular, $\nabla_m L$ and $\nabla_m G$ are well-defined for 
\begin{align}
L \in \Ga\big(L(T\Imm(M,N);T\Imm(M,N))\big), \quad
G \in \Ga\big(L^2_{\on{sym}}(T\Imm(M,N);\R)\big)
\end{align}
by the usual formulas
\begin{align}
(\nabla_m P)(h) &= \nabla_m\big(P(h)\big) - P(\nabla_mh), \\
(\nabla_m G)(h,k) &= \nabla_m\big(G(h,k)) - G(\nabla_m h,k) - G(h,\nabla_m k). 
\end{align}


\end{document}